\def\qed{\hfill$\Box$\vspace{12pt}}
\def\choose#1#2{\left (\!\!\begin{array}{c}#1\\#2\end{array}\!\!\right )}
\long\def\delete#1{}
\newcommand{\be}{\begin{equation}}
\newcommand{\ee}{\end{equation}}
\newcommand{\bea}{\begin{eqnarray}}
\newcommand{\eea}{\end{eqnarray}}
\newcommand{\bean}{\begin{eqnarray*}}
\newcommand{\eean}{\end{eqnarray*}}
\def\Cay{{\rm Cay}}
\def\Spec{{\rm Spec}}
\newtheorem{thm}{Theorem}[section]
\newtheorem{cor}[thm]{Corollary}
\newtheorem{lem}[thm]{Lemma}
\newtheorem{assump}[thm]{Assumption}
\newtheorem{rem}{Remark}
\numberwithin{equation}{section}
\title{Spectral properties of unitary Cayley graphs of finite commutative rings}
\author{Xiaogang Liu\, and\, Sanming Zhou
\\
{\small Department of Mathematics and Statistics}\\
{\small The University of Melbourne}\\
{\small Parkville, VIC 3010, Australia}\\
{\small xiaogliu@student.unimelb.edu.au, smzhou@ms.unimelb.edu.au}}
\date{}
\begin{document}

\openup 0.5\jot
\maketitle

\begin{abstract}
Let $R$ be a finite commutative ring. The unitary Cayley graph of $R$, denoted $G_R$, is the graph with vertex set $R$ and edge set $\left\{\{a,b\}:a,b\in R, a-b\in R^\times\right\}$, where $R^\times$ is the set of units of $R$. An $r$-regular graph is Ramanujan if the absolute value of every eigenvalue of it other than $\pm r$ is at most $2\sqrt{r-1}$. In this paper we give a necessary and sufficient condition for $G_R$ to be Ramanujan, and a necessary and sufficient condition for the complement of $G_R$ to be Ramanujan. We also determine the energy of the line graph of $G_R$, and compute the spectral moments of $G_R$ and its line graph.

\bigskip

\noindent\textbf{Keywords:} Unitary Cayley graph, Local ring, Finite commutative ring, Ramanujan graph, Energy of a graph, Spectral moment

\bigskip

\noindent{{\bf AMS Subject Classification (2010):} 05C50, 05C25}
\end{abstract}

\section{Introduction}

The {\em adjacency matrix} of a graph is the matrix with rows and columns indexed by its vertices such that the $(i,j)$-entry is equal to $1$ if vertices $i$ and $j$ are adjacent and $0$ otherwise. The \emph{eigenvalues} of a graph are eigenvalues of its adjacency matrix, and the \emph{spectrum} of a graph is the collection of its eigenvalues together with multiplicities. If $\lambda_1,\lambda_2,\ldots,\lambda_k$ are distinct eigenvalues of a graph $G$ and $m_1,m_2,\ldots,m_k$ the corresponding multiplicities, then we denote the spectrum of $G$ by
\begin{eqnarray*}
\Spec (G)=\left(\begin{array}{ccc}
\lambda_1 & \ldots  & \lambda_k \\
m_1      & \ldots  & m_k
\end{array}
\right).
\end{eqnarray*}

Let $R$ be a finite ring with unit element $1\neq0$, and let $R^\times$ denote its set of units. The \emph{unitary Cayley graph} \cite{kn:Fuchs04,kn:Fuchs05} of $R$, $G_R=\Cay(R,R^\times)$, is defined as the Cayley graph on the additive group of $R$ with respect to $R^\times$; that is, $G_R$ has vertex set $R$ such that $x, y \in R$ are adjacent if and only if $x-y\in R^\times$. It is evident that $G_R$ is a $|R^\times|$-regular undirected graph. Unitary Cayley graphs were introduced in \cite{kn:Fuchs04,kn:Fuchs05}, and their properties were investigated in \cite{kn:Akhtar09,kn:Kiani11,kn:Kiani12}, and \cite{kn:Droll10,kn:Ilic09,kn:Klotz07,kn:Ramaswamy09} in the special case when $R = \mathbb{Z}/n\mathbb{Z}$. For example, in \cite{kn:Klotz07} the chromatic number, clique number, independence number, diameter, vertex-connectivity and perfectness of $G_{\mathbb{Z}/n\mathbb{Z}}$ are determined.  In \cite{kn:Akhtar09}, the diameter, girth, eigenvalues, vertex-connectivity, edge-connectivity, chromatic number, chromatic index and automorphism group of $G_R$ are determined for an arbitrary finite commutative ring $R$, and all planar graphs and perfect graphs within this class are classified. The chromatic number, clique number and independence number of $G_R$ are also given in \cite{kn:Kiani12} along with other results. In \cite{kn:Droll10}, all unitary Cayley graphs $G_{\mathbb{Z}/n\mathbb{Z}}$ that are Ramanujan are classified.

A finite $r$-regular graph $G$ is called \emph{Ramanujan} \cite{HLW, kn:Murty03} if $\lambda(G) \leq 2\sqrt{r-1}$, where $\lambda(G)$ is the maximum in absolute value of an eigenvalue of $G$ other than $\pm r$. This notion arises from the well known Alon-Boppana bound (see \cite[Theorem 0.8.8]{DSV}), which asserts that $\liminf_{i \rightarrow \infty} \lambda(G_i) \ge 2\sqrt{r-1}$ for any family of finite, connected, $r$-regular graphs $\{G_i\}_{i \ge 1}$ with $|V(G_i)| \rightarrow \infty$ as $i \rightarrow \infty$. Over many years a great amount of work has been done on Ramanujan graphs with an emphasis on constructions of infinite families of Ramanujan $r$-regular graphs for a fixed integer $r$. The reader is referred to \cite{DSV} and two survey papers \cite{HLW, kn:Murty03} on Ramanujan graphs and related expander graphs.

The \emph{$k$-th spectral moment} of a graph $G$ with $n$ vertices and with eigenvalues $\lambda_1,\lambda_2,\ldots,\lambda_n$ is defined as
\[
s_k(G)=\sum_{i=1}^n \lambda_i^k,
\]
where $k \ge 0$ is an integer. The \emph{energy} of $G$ is defined as
$$
E(G) = \sum_{i=1}^n |\lambda_i|.
$$
Spectral moments are related to many combinatorial properties of graphs. For example, they play an important role in the proof by Lubotzky, Phillips and Sarnak \cite{LPS} of the Alon-Boppana bound. And the 4th spectral moment was used in \cite{RT} to give an upper bound on the energy of a bipartite graph.

The energy of a graph was introduced in \cite{kn:Gutman78} in the context of mathematical chemistry. Since then it has been studied extensively; see \cite{kn:Brualdiw, kn:Gutman01, kn:Gutman10, kn:Ilic09, kn:Kiani11, kn:Li10, kn:Ramane05, kn:Ramaswamy09, kn:Rojo11, kn:Rojo111, kn:Sander11} for examples. The energy of the unitary Cayley graph $G_{\mathbb{Z}/n\mathbb{Z}}$ was obtained in \cite{kn:Ilic09,kn:Ramaswamy09}, and that of its complement in \cite{kn:Ilic09}. This was generalized by D. Kiani et al. \cite{kn:Kiani11} to $G_R$ for an arbitrary finite commutative ring $R$.

The main results of the present paper are as follows. First, we give a necessary and sufficient condition (Theorems \ref{SXnThm1} and \ref{XnThm1}) for the unitary Cayley graph of any finite commutative ring to be Ramanujan, and a necessary and sufficient condition (Theorems \ref{ComSXnThm1} and \ref{ComXnThm222}) for the complement of such a graph to be Ramanujan. Second, we determine completely the energy of the line graph of $G_R$ for an arbitrary finite commutative ring $R$ (Theorem \ref{thmE}). Thirdly, we compute the spectral moments of $G_R$ and its line graph (Theorem \ref{thmSM}) for an arbitrary $R$.

In the special case when the ring considered is $\mathbb{Z}/n\mathbb{Z}$, Theorems \ref{SXnThm1} and \ref{XnThm1} recover (see Corollary \ref{XnCor1}) the classification \cite{kn:Droll10} of Ramanujan unitary Cayley graphs $G_{\mathbb{Z}/n\mathbb{Z}}$. We would like to point out that, although we obtain interesting infinite families of Ramanujan graphs in this way, they are not of fixed degrees. This is expected because it is known (see e.g.~\cite{kn:Murty05}) that for any given $r$ it is impossible to construct an infinite family of $r$-regular Cayley graphs on abelian groups which are all Ramanujan. As pointed out in \cite{kn:Droll10}, despite the fact that the theory of Ramanujan graphs is focused on infinite families of Ramanujan graphs with a fixed degree, constructions of infinite families of Ramanujan graphs of non-fixed degrees are also of some interest.

The rest of this paper is organised as follows. In the next section we collect some known results that will be used in subsequent sections. In Sections \ref{sec:ram} and \ref{sec:ram-comp}, we give characterisations of Ramanujan unitary Cayley graphs and Ramanujan complements of unitary Cayley graphs, respectively.
In Section \ref{sec:energy} we determine the energy of the unitary Cayley graph of any finite commutative ring. We finish the paper with a brief discussion on the spectral moments of unitary Cayley graphs.

\section{Preliminaries}
\label{sec:prel}

A \emph{local ring}  \cite{kn:Atiyah69} is a commutative ring with a unique maximal ideal. It is readily seen\cite{kn:Atiyah69,kn:Dummit03} that, if $R$ is a local ring with $M$ as its unique maximal ideal, then $R^\times=R\setminus M$. It is well known \cite{kn:Atiyah69,kn:Dummit03} that every finite commutative ring can be expressed as a direct product of finite local rings, and this decomposition is unique up to permutations of such local rings. Throughout the paper we assume the following:

\begin{assump}
\label{as:1}
$R=R_1\times R_2\times\cdots\times R_s$ is a finite commutative ring, where $R_i$ is a local ring with maximal ideal $M_i$ of order $m_i$, $1 \le i \le s$. We assume
\[|R_1|/m_1 \leq |R_2|/m_2 \leq \cdots \leq |R_s|/m_s.\]
\end{assump}

It is known \cite{kn:Akhtar09} that $G_R=\otimes_{i=1}^sG_{R_i}$ is the tensor product of $G_{R_1}, \ldots, G_{R_s}$. (The \emph{tensor product} $G\otimes H$ of two graphs $G$ and $H$ is the graph with vertex set $V(G)\times V(H)$, in which $(u,v)$ is adjacent to $(x,y)$ if and only if $u$ is adjacent to $x$ in $G$ and $v$ is adjacent to $y$ in $H$.) The degree of $G_R$ is equal to
\begin{equation}
\label{eq:basic}
|R^\times|=\prod_{i=1}^s(|R_i|-m_i)=\prod_{i=1}^s m_i \left((|R_{i}|/m_{i})-1\right)=|R|\prod_{i=1}^s\left(1-\frac{1}{|R_i|/m_i}\right).
\end{equation}
Define
$$
\lambda_C = (-1)^{|C|}\dfrac{|R^\times|}{\prod_{j\in C}(|R^\times_j|/m_j)}
$$
for every subset $C$ of $\{1,2,\ldots,s\}$.
In particular, $\lambda_{\emptyset} = |R^\times|$, and if $s=1$ then $\lambda_{\{1\}} = -m$, where $m$ is the order of the unique maximal ideal of $R$.

Proofs of our results rely on knowledge of the spectra of $G_R$, stated as follows.

\begin{lem}\label{ringspectrum}\emph{\cite{kn:Kiani11}}
The eigenvalues of $G_R$ are
\begin{itemize}
\item[\rm (a)] $\lambda_C$, repeated $\prod_{j\in C}|R^\times_j|/m_j$ times, where $C$ runs over all subsets of $\{1,2,\ldots,s\}$; and
\item[\rm (b)] $0$ with multiplicity $|R|-\prod_{i=1}^s\left(1+\dfrac{|R^\times_i|}{m_i}\right)$.
\end{itemize}
In particular, if $R$ is a finite local ring and $m$ is the order of its unique maximal ideal, then
\begin{eqnarray*}
\Spec (G_R)=\left(\begin{array}{ccc}
|R|-m & -m  & 0 \\
1          & \frac{|R|}{m}-1 & \frac{|R|}{m}(m-1)
\end{array}
\right)=\left(\begin{array}{ccc}
|R^\times| & -m  & 0 \\
1          & \frac{|R^\times|}{m} & \left(\frac{|R^\times|}{m}+1\right)(m-1)
\end{array}
\right).
\end{eqnarray*}
\end{lem}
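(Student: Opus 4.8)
The plan is to prove the local case first and then lift it to the general case through the tensor product decomposition $G_R = \otimes_{i=1}^s G_{R_i}$ recorded above. When $R$ is local with maximal ideal $M$ of order $m$, we have $R^\times = R\setminus M$, and $M$ is an additive subgroup of $(R,+)$ of index $|R|/m$. Hence $x$ and $y$ are adjacent in $G_R$ exactly when $x-y\notin M$, i.e.\ when they lie in distinct cosets of $M$, so $G_R$ is the complete multipartite graph with $|R|/m$ parts each of size $m$. Its adjacency matrix is $(J_{|R|/m}-I_{|R|/m})\otimes J_m$, where $((p,i),(q,j))$ is an edge iff $p\neq q$. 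Since $J_m$ has eigenvalues $m$ (once) and $0$ ($m-1$ times), while $J_{|R|/m}-I_{|R|/m}$ has eigenvalues $|R|/m-1$ (once) and $-1$ ($|R|/m-1$ times), taking pairwise products gives the local spectrum: $|R|-m$ once, $-m$ with multiplicity $|R|/m-1=|R^\times|/m$, and $0$ with multiplicity $(|R|/m)(m-1)=(|R^\times|/m+1)(m-1)$.

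For the general case I would use that the adjacency matrix of $G_R=\otimes_{i=1}^s G_{R_i}$ is the Kronecker product $\bigotimes_{i=1}^s A(G_{R_i})$, whose eigenvalues are precisely the products $\prod_{i=1}^s \mu_i$ of eigenvalues $\mu_i$ of the factors, with multiplicities multiplying. By the local case each $G_{R_i}$ has exactly two nonzero eigenvalues, $|R_i^\times|=|R_i|-m_i$ (multiplicity $1$) and $-m_i$ (multiplicity $|R_i^\times|/m_i$), besides $0$. A product $\prod_i \mu_i$ is nonzero only when no factor vanishes, i.e.\ when for each $i$ one selects $\mu_i\in\{|R_i^\times|,-m_i\}$. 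Encoding such a choice by the subset $C\subseteq\{1,\ldots,s\}$ of indices at which $-m_i$ is chosen, the eigenvalue is $\prod_{j\in C}(-m_j)\prod_{i\notin C}|R_i^\times|$ with multiplicity $\prod_{j\in C}(|R_j^\times|/m_j)$, which accounts for every eigenvalue whose defining product has no zero factor.

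The remainder is bookkeeping. Using $|R^\times|=\prod_{i=1}^s|R_i^\times|$ and cancelling, I would check that $\prod_{j\in C}(-m_j)\prod_{i\notin C}|R_i^\times| = (-1)^{|C|}\prod_{j\in C}m_j\prod_{i\notin C}|R_i^\times| = \lambda_C$, matching part (a) together with its multiplicity. For part (b), summing the nonzero multiplicities gives $\sum_{C}\prod_{j\in C}(|R_j^\times|/m_j)=\prod_{i=1}^s\left(1+|R_i^\times|/m_i\right)$ by expanding the product, and subtracting this from $|R|=|V(G_R)|$ yields the stated multiplicity of $0$. I do not anticipate a serious obstacle; the only points needing care are recognising the complete multipartite structure in the local case and getting the sign and cancellation in $\lambda_C$ right. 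Note finally that distinct subsets $C$ may produce equal values of $\lambda_C$, but this is harmless, since the statement indexes eigenvalues by $C$ and coincident values simply have their multiplicities added.
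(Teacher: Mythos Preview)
The paper does not give its own proof of this lemma but cites it from \cite{kn:Kiani11}, remarking only that Lemma~\ref{tensorspectrum} (the tensor-product eigenvalue formula) was used there. Your argument is correct and follows exactly that route---identify $G_{R_i}$ as a complete multipartite graph to get the local spectrum, then take Kronecker products---so it matches the indicated approach.
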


\begin{rem}
\label{rem:mul}
{\em It may happen that $\lambda_C = \lambda_{C'}$ for distinct subsets $C, C'$ of $\{1,2,\ldots,s\}$. In fact, this occurs if and only if $|R_j|=2m_j$ for every $j \in (C \setminus C') \cup (C' \setminus C)$. Thus, in (a) above the multiplicity of the eigenvalue $\lambda_C$ may be greater than $\prod_{j\in C}|R^\times_j|/m_j$. For example, the multiplicity of the largest eigenvalue $|R^\times|$ of $G_R$ is equal to $\sum_C \left(\prod_{j\in C}|R^\times_j|/m_j\right)$, where the sum is running over all $C$ such that $|C|$ is even and $|R_j|=2m_j$ for every $j \in C$.}
\end{rem}

The following result was used in the proof \cite{kn:Kiani11} of Lemma \ref{ringspectrum}. It will be needed in our computing of the spectral moments of $G_R$.

\begin{lem}
\label{tensorspectrum}
\emph{\cite[Theorem 2.5.4]{kn:Cvetkovic10}}
Let $G$ and $H$ be graphs with eigenvalues $\lambda_1,\lambda_2,\ldots,\lambda_n$ and $\mu_1,\mu_2,\ldots,\mu_m$, respectively. Then the eigenvalues of $G\otimes H$ are $\lambda_i\mu_j$, $1\leq i\leq n, 1\leq j\leq m$.
\end{lem}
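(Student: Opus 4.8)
The plan is to identify the adjacency matrix of the tensor product $G \otimes H$ with the Kronecker product of the adjacency matrices of $G$ and $H$, and then to read off the spectrum from the well-known eigenvalue behaviour of Kronecker products. Writing $A$ and $B$ for the adjacency matrices of $G$ and $H$, I would first fix an ordering of the product vertex set $V(G) \times V(H)$ (say lexicographic) and observe that, by the definition of the graph tensor product, the entry of $A(G \otimes H)$ indexed by the pair of vertices $(u,v)$ and $(x,y)$ equals $1$ exactly when $u$ is adjacent to $x$ in $G$ and $v$ is adjacent to $y$ in $H$. This is precisely $A_{ux} B_{vy}$, which is the corresponding entry of the Kronecker product $A \otimes B$. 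Hence $A(G \otimes H) = A \otimes B$ with respect to this ordering; any other ordering merely conjugates by a permutation matrix and so does not affect the spectrum.

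The second step is the purely algebraic identity $(A \otimes B)(\mathbf{x} \otimes \mathbf{y}) = (A\mathbf{x}) \otimes (B\mathbf{y})$, valid for all vectors $\mathbf{x}, \mathbf{y}$. Consequently, if $A\mathbf{x}_i = \lambda_i \mathbf{x}_i$ and $B\mathbf{y}_j = \mu_j \mathbf{y}_j$, then $(A \otimes B)(\mathbf{x}_i \otimes \mathbf{y}_j) = \lambda_i \mu_j (\mathbf{x}_i \otimes \mathbf{y}_j)$, so each $\mathbf{x}_i \otimes \mathbf{y}_j$ is an eigenvector of $A(G \otimes H)$ with eigenvalue $\lambda_i \mu_j$. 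This already exhibits every product $\lambda_i \mu_j$ as an eigenvalue.

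To conclude that these products account for the entire spectrum with the correct multiplicities, I would use that adjacency matrices are real and symmetric, hence diagonalisable with orthonormal eigenbases. Choosing orthonormal eigenbases $\mathbf{x}_1,\dots,\mathbf{x}_n$ of $A$ and $\mathbf{y}_1,\dots,\mathbf{y}_m$ of $B$, the $nm$ vectors $\mathbf{x}_i \otimes \mathbf{y}_j$ satisfy $\langle \mathbf{x}_i \otimes \mathbf{y}_j,\, \mathbf{x}_k \otimes \mathbf{y}_\ell \rangle = \langle \mathbf{x}_i, \mathbf{x}_k\rangle \langle \mathbf{y}_j, \mathbf{y}_\ell\rangle$ and are therefore orthonormal, so they form an orthonormal basis of $\mathbb{R}^{nm}$. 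Being a full basis of eigenvectors, they show that the list $\lambda_i \mu_j$ ($1 \le i \le n$, $1 \le j \le m$), counted with repetition, is exactly the spectrum of $G \otimes H$.

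The routine part is the entrywise matching in the first step and the algebraic identity in the second; the only point demanding care is the completeness claim in the last step, namely that the $\lambda_i \mu_j$ exhaust the spectrum with multiplicities rather than forming a mere subset of the eigenvalues. The hard part is thus ensuring a complete eigenbasis, which is guaranteed here by the symmetry of the adjacency matrices together with the orthonormality of the tensor-product vectors.
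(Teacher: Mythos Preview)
Your argument is correct and is the standard proof: identify the adjacency matrix of $G\otimes H$ with the Kronecker product $A\otimes B$, use the identity $(A\otimes B)(\mathbf{x}\otimes\mathbf{y})=(A\mathbf{x})\otimes(B\mathbf{y})$, and appeal to symmetry of $A$ and $B$ to obtain a full orthonormal eigenbasis of tensor products. The paper itself does not prove this lemma at all; it is simply quoted from \cite[Theorem~2.5.4]{kn:Cvetkovic10}, so there is no ``paper's own proof'' to compare against, and what you have written is essentially the argument one finds in that reference.
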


The \emph{complement} $\overline{G}$ of a graph $G$ is the graph with the same vertex set as $G$ such that two vertices are adjacent in $\overline{G}$ if and only if they are not adjacent in $G$.

\begin{lem}
\label{CXEigenvalues}
\emph{\cite{kn:Godsil01,kn:West00}}
Let $G$ be an $r$-regular graph with $n$ vertices. Then $G$ and $\overline{G}$ have the same eigenvectors, and their largest eigenvalues are $r$ and $n-r-1$ respectively. Moreover, if the eigenvalues of $G$ are $r,\lambda_2,\ldots,\lambda_n$, then the eigenvalues of $\overline{G}$ are $n-r-1,-1-\lambda_2,\ldots,-1-\lambda_n$.
\end{lem}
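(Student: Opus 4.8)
The plan is to exploit the fundamental identity relating the adjacency matrices of a graph and its complement. Writing $A$ and $\overline{A}$ for the adjacency matrices of $G$ and $\overline{G}$, $J$ for the all-ones $n \times n$ matrix, and $I$ for the identity, every pair of distinct vertices is adjacent in exactly one of $G$ and $\overline{G}$, while the diagonal is zero in both; hence
\[
\overline{A} = J - I - A.
\]
The whole proof reduces to reading off the eigenvalues of the right-hand side using the eigenvectors of $A$.

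First I would treat the all-ones vector $\mathbf{1}$. Since $G$ is $r$-regular, every row of $A$ sums to $r$, so $A\mathbf{1} = r\mathbf{1}$; this also shows $r$ is an eigenvalue of $G$, and it is the largest because $r$ is the degree of the regular graph $G$. Applying the identity and using $J\mathbf{1} = n\mathbf{1}$ gives
\[
\overline{A}\,\mathbf{1} = (J - I - A)\mathbf{1} = n\mathbf{1} - \mathbf{1} - r\mathbf{1} = (n - r - 1)\mathbf{1},
\]
so $\mathbf{1}$ is a common eigenvector and $n-r-1$ is an eigenvalue of $\overline{G}$; moreover $\overline{G}$ is $(n-r-1)$-regular, so $n-r-1$ is its largest eigenvalue by the same degree argument.

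Next I would handle the remaining eigenvalues. Because $A$ is real symmetric, it admits an orthonormal basis of eigenvectors, and I may choose this basis so that $\mathbf{1}$ (normalised) is one of its members; every other basis vector $v$, belonging to some eigenvalue $\lambda_i$ with $i \ge 2$, is then orthogonal to $\mathbf{1}$. For such $v$ we have $Jv = \mathbf{1}(\mathbf{1}^{\top} v) = 0$, so the identity yields
\[
\overline{A}\,v = (J - I - A)v = 0 - v - \lambda_i v = (-1 - \lambda_i)\,v.
\]
Thus each such $v$ is simultaneously an eigenvector of $G$ and of $\overline{G}$, with the eigenvalue transforming from $\lambda_i$ to $-1-\lambda_i$. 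Together with the vector $\mathbf{1}$ this produces a common orthonormal eigenbasis and shows that the spectrum of $\overline{G}$ is exactly $n-r-1, -1-\lambda_2, \ldots, -1-\lambda_n$, as claimed. There is no genuine obstacle here; the only point requiring care is the selection of the orthonormal eigenbasis containing $\mathbf{1}$, which is what allows the argument to go through irrespective of the multiplicity of $r$, and the essential use of regularity to make $\mathbf{1}$ an eigenvector in the first place.
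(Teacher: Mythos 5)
Your proof is correct and complete: the identity $\overline{A} = J - I - A$, the observation that $\mathbf{1}$ is an eigenvector of the adjacency matrix of a regular graph, and the choice of an orthonormal eigenbasis containing $\mathbf{1}$ (so that $J$ annihilates the remaining basis vectors) together give exactly the claimed spectrum, with the multiplicity issue for the eigenvalue $r$ handled properly. The paper itself offers no proof, citing the result to the textbooks of Godsil--Royle and West, and your argument is precisely the standard one found there, so there is nothing to compare beyond noting the agreement.
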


Lemmas \ref{ringspectrum} and \ref{CXEigenvalues} together imply the following result.

\begin{cor}\label{Corringspectrum}
The eigenvalues of $\overline{G}_R$ are
\begin{itemize}
\item[\rm (a)]  $|R|-1-|R^\times|$;
\item[\rm (b)] $-\lambda_C - 1$, repeated $\prod_{j\in C}|R^\times_j|/m_j$ times, where $C$ runs over all nonempty subsets of $\{1,2,\ldots,s\}$; and
\item[\rm (c)] $-1$ with multiplicity $|R|-\prod_{i=1}^s\left(1+\dfrac{|R^\times_i|}{m_i}\right)$.
\end{itemize}
In particular, if $R$ is a finite local ring and $m$ is the order of its unique maximal ideal, then
\begin{eqnarray*}
\Spec(\overline{G}_R)=\left(\begin{array}{cc}
m-1  & -1 \\
\frac{|R|}{m} & \frac{|R|}{m}(m-1)
\end{array}\right).
\end{eqnarray*}
\end{cor}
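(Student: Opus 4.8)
The plan is to deduce the spectrum of $\overline{G}_R$ from that of $G_R$ (Lemma \ref{ringspectrum}) by a single application of Lemma \ref{CXEigenvalues}. First I would observe that $G_R$ is a regular graph of degree $r = |R^\times|$ on $n = |R|$ vertices, so that Lemma \ref{CXEigenvalues} applies. The key bookkeeping step is to split the eigenvalue list of $G_R$ into its largest eigenvalue and the rest, since Lemma \ref{CXEigenvalues} treats these two parts differently.

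For the largest eigenvalue, I would note that $\lambda_\emptyset = |R^\times|$ arises from the subset $C = \emptyset$ in part (a) of Lemma \ref{ringspectrum}, where the multiplicity $\prod_{j \in \emptyset} |R_j^\times|/m_j = 1$ corresponds precisely to the all-ones eigenvector of the regular graph $G_R$. Lemma \ref{CXEigenvalues} then sends this eigenvalue to $n - r - 1 = |R| - 1 - |R^\times|$, which is conclusion (a).

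Next I would apply the transformation $\lambda \mapsto -1 - \lambda$ (which Lemma \ref{CXEigenvalues} prescribes for every eigenvalue other than the top one) to the remaining eigenvalues of $G_R$, leaving their multiplicities unchanged. The eigenvalues $\lambda_C$ for nonempty $C$, with multiplicities $\prod_{j \in C} |R_j^\times|/m_j$, become $-\lambda_C - 1$ with the same multiplicities, yielding (b); and the eigenvalue $0$, with multiplicity $|R| - \prod_{i=1}^s (1 + |R_i^\times|/m_i)$, becomes $-1$ with that multiplicity, yielding (c). For the local case $s = 1$, I would substitute into these formulas and merge the coincident values: both $n - r - 1 = m - 1$ and $-1 - (-m) = m - 1$ contribute to the eigenvalue $m - 1$, whose multiplicity becomes $1 + (|R|/m - 1) = |R|/m$, while $-1$ retains multiplicity $(|R|/m)(m - 1)$.

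The argument is essentially routine, so the only point requiring care is the separation of $\lambda_\emptyset$ with multiplicity exactly one. Remark \ref{rem:mul} warns that distinct subsets $C$ may give equal $\lambda_C$ (when $|R_j| = 2 m_j$), so in particular the value $|R^\times|$ can recur for nonempty even $C$; the mild subtlety is that these recurrences belong to the orthogonal complement of the all-ones vector and therefore correctly transform under $\lambda \mapsto -1 - \lambda$ into copies of $-1 - |R^\times|$ in (b), rather than spuriously contributing to (a). Since both Lemma \ref{ringspectrum} and the corollary present the $\lambda_C$ in the same subset-indexed form, no inconsistency arises.
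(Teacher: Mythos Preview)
Your proposal is correct and follows exactly the approach indicated in the paper, which simply states that the result is implied by Lemmas \ref{ringspectrum} and \ref{CXEigenvalues} together. Your additional care in isolating the single copy of $\lambda_\emptyset$ corresponding to the all-ones eigenvector (in light of Remark \ref{rem:mul}) is a welcome clarification that the paper leaves implicit.
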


The \emph{line graph} $\mathcal{L}(G)$ of a graph $G$ is the graph with vertices the lines of $G$ such that two vertices are adjacent if and only if the corresponding lines have a common end-vertex.
It is well known \cite{kn:Sachs67} (see also \cite[Theorem 2.4.1]{kn:Cvetkovic10}) that, if an $r$-regular graph $G$ of order $n$ has eigenvalues $\lambda_1,\lambda_2,\ldots,\lambda_n$, then the eigenvalues of $\mathcal {L}(G)$ are $\lambda_i+r-2$, for $i=1,2,\ldots,n$, and $-2$ repeated $n(r-2)/2$ times. This together with Lemma \ref{ringspectrum} implies the following result.
\begin{cor}\label{lineeigenvalue}
The eigenvalues of $\mathcal {L}(G_R)$ are
\begin{itemize}
\item[\rm (a)] $\lambda_C+|R^\times|-2$, repeated $\prod_{j\in C}|R^\times_j|/m_j$ times, where $C$ runs over all subsets of $\{1,2,\ldots,s\}$;
\item[\rm (b)] $|R^\times|-2$ with multiplicity $|R|-\prod_{i=1}^s\left(1+\dfrac{|R^\times_i|}{m_i}\right)$; and
\item[\rm (c)] $-2$, repeated $|R|\left(|R^\times|-2\right)/2$ times.
\end{itemize}
In particular, if $R$ is a finite local ring and $m$ is the order of its unique maximal ideal, then
\begin{eqnarray*}
\Spec (\mathcal {L}(G_R))=\left(\begin{array}{cccc}
2|R^\times|-2 &|R^\times|-m-2  & |R^\times|-2 &-2\\
1          & \frac{|R^\times|}{m} & \left(\frac{|R^\times|}{m}+1\right)(m-1)&|R|(|R^\times|-2)/2
\end{array}
\right).
\end{eqnarray*}
\end{cor}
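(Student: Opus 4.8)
The plan is to read the spectrum of $\mathcal{L}(G_R)$ directly off the line-graph eigenvalue formula recorded immediately before the corollary, feeding into it the spectrum of $G_R$ supplied by Lemma \ref{ringspectrum}. First I would record the two parameters that formula needs: by \eqref{eq:basic} the graph $G_R$ is regular of degree $r=|R^\times|$ on $n=|R|$ vertices. The formula then asserts that each eigenvalue $\mu$ of $G_R$ contributes an eigenvalue $\mu+|R^\times|-2$ of $\mathcal{L}(G_R)$ with the same multiplicity, together with the eigenvalue $-2$ of multiplicity $n(r-2)/2=|R|(|R^\times|-2)/2$.

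Next I would substitute the eigenvalues listed in Lemma \ref{ringspectrum}. The eigenvalues $\lambda_C$, with multiplicity $\prod_{j\in C}|R_j^\times|/m_j$ as $C$ ranges over all subsets of $\{1,\ldots,s\}$, shift to $\lambda_C+|R^\times|-2$ with the same multiplicities; this is exactly part (a). The eigenvalue $0$, with multiplicity $|R|-\prod_{i=1}^s(1+|R_i^\times|/m_i)$, shifts to $0+|R^\times|-2=|R^\times|-2$, giving part (b). The extra $-2$'s produced by the line-graph formula give part (c). As a consistency check I would confirm that the multiplicities sum to the order of $\mathcal{L}(G_R)$, namely the number of edges $|R|\cdot|R^\times|/2$ of $G_R$: indeed $n+n(r-2)/2=nr/2$.

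For the local-ring special case I would set $s=1$, so that $C$ is either $\emptyset$ or $\{1\}$, and use $\lambda_\emptyset=|R^\times|$ and $\lambda_{\{1\}}=-m$. Shifting each by $|R^\times|-2$ yields the eigenvalues $2|R^\times|-2$ with multiplicity $1$, and $|R^\times|-m-2$ with multiplicity $|R^\times|/m$; the zero eigenvalue gives $|R^\times|-2$ with multiplicity $(|R^\times|/m+1)(m-1)$; and part (c) gives $-2$ with multiplicity $|R|(|R^\times|-2)/2$. This is precisely the displayed spectrum.

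Since every ingredient is already available, there is essentially no obstacle here beyond careful bookkeeping of the multiplicities. The one point deserving a word of caution is that the line-graph eigenvalue formula presumes degree at least $2$ so that the count $|R|(|R^\times|-2)/2$ of $-2$-eigenvalues is a genuine nonnegative multiplicity; the sole degenerate case is $|R^\times|=1$, equivalently $R\cong\mathbb{F}_2^{\,s}$, for which $G_R$ is a perfect matching and $\mathcal{L}(G_R)$ is edgeless, and this case would be noted or handled separately.
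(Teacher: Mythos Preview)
Your argument is correct and matches the paper's own proof, which simply cites the standard line-graph eigenvalue formula for regular graphs together with Lemma~\ref{ringspectrum}. Your additional remarks (the multiplicity count check and the caveat about the degenerate case $|R^\times|=1$) go slightly beyond what the paper bothers to say, but the core derivation is identical.
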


\begin{rem}
\label{rem:mu2}
{\em The multiplicity of the eigenvalue $-2$ of $\mathcal {L}(G_R)$ can be greater than $|R|(|R^\times|-2)/2$, and this happens if and only if there exists at least one subset $C$ of $\{1,2,\ldots,s\}$ with $|C|$ odd such that $\prod_{j\in C}|R^\times_j|/m_j=1$.}
\end{rem}

\begin{lem}\label{localmaximal}\emph{\cite[Proposition 2.1]{kn:Akhtar09}}
Let $R$ be a finite local ring and $m$ the order of its unique maximal ideal. Then there exists a prime $p$ such that $|R|$, $m$ and $|R|/m$ are all powers of $p$.
\end{lem}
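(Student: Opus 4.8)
The plan is to reduce everything to the single observation that the residue field $R/M$ controls all three quantities. Since $M$ is the unique maximal ideal of $R$, the quotient $R/M$ is a field, and being finite it has order a prime power, say $|R/M| = p^f$ for some prime $p$ and integer $f \ge 1$. Because $|R|/m = |R|/|M| = |R/M|$, this already shows that $|R|/m$ is a power of $p$. It therefore remains only to prove that one of $|R|$ and $m$ is a power of $p$, since the third quantity is then the product $|R| = m \cdot (|R|/m)$ (equivalently the ratio $m = |R|/(|R|/m)$) of two powers of $p$.

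To show that $|R|$ is a power of $p$, I would exploit that $R$, being a finite commutative ring, is Artinian, so its Jacobson radical $M$ is nilpotent: $M^k = \{0\}$ for some $k \ge 1$. Consider the descending chain $R = M^0 \supseteq M \supseteq M^2 \supseteq \cdots \supseteq M^k = \{0\}$. Each factor $M^i/M^{i+1}$ is annihilated by $M$, so it is naturally a module over the field $R/M$, i.e.\ a finite-dimensional $R/M$-vector space, whence $|M^i/M^{i+1}| = |R/M|^{d_i} = p^{f d_i}$ for some $d_i \ge 0$. Multiplying the orders of the successive quotients gives $|R| = \prod_{i=0}^{k-1} |M^i/M^{i+1}| = p^{f \sum_i d_i}$, a power of $p$.

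An alternative route to the same conclusion, avoiding the filtration, is to argue via the characteristic. Since a local ring has no nontrivial idempotents, the additive order $n$ of $1$ cannot factor as a product of two coprime integers greater than $1$ (such a factorisation would, via the Chinese Remainder Theorem applied to the prime subring $\mathbb{Z}\cdot 1 \cong \mathbb{Z}/n\mathbb{Z}$, produce a nontrivial idempotent of $R$), so $n = p^a$ is a prime power. As $n \cdot r = (n\cdot 1)\, r = 0$ for every $r \in R$, the additive group $(R,+)$ has exponent dividing $p^a$ and is therefore an abelian $p$-group, giving $|R| = p^c$ for some $c$.

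With $|R|$ a power of $p$ in hand, I would finish by noting that $m = |M| = |R|/|R/M|$ is a quotient of two powers of $p$ and hence itself a power of $p$, completing the argument. The only non-routine input is the structural fact powering the middle step---the nilpotency of $M$ in the first approach, or the absence of nontrivial idempotents in a local ring in the second---and I expect identifying and justifying that fact to be the main (though entirely standard) obstacle; everything else is bookkeeping with the orders of finite vector spaces and finite abelian groups.
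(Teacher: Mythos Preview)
Your argument is correct: both the filtration route (using nilpotency of $M$ and the $R/M$-vector-space structure on the successive quotients) and the characteristic route (no nontrivial idempotents in a local ring forces the characteristic to be a prime power) are standard and each yields the conclusion cleanly.

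There is, however, nothing to compare against in this paper: the lemma is quoted verbatim from \cite[Proposition~2.1]{kn:Akhtar09} and no proof is supplied here. Your write-up would serve perfectly well as a self-contained justification in place of that citation.
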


Let
$$
n=p_1^{\alpha_1}p_2^{\alpha_2}\cdots p_s^{\alpha_s}
$$
be the canonical factorization of an integer $n$ into prime powers, where $p_1<p_2<\cdots<p_s$ are primes and each $\alpha_i\geq1$. The Euler's totient function is defined by
$\varphi(n)=n\prod\limits_{i=1}^s\left(1-(1/p_i)\right)$.

\begin{lem}\label{Inequality}\emph{\cite{kn:Ilic09}}
Let $n$ be as above. If $s\geq3$ or $s=2$ and $p_1>2$, then
\[2^{s-1}\varphi(n)>n.\]
\end{lem}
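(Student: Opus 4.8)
The plan is to strip the inequality down to a purely multiplicative statement about the prime divisors of $n$ and then dispose of it by a one-line-per-case argument that exploits monotonicity. Since $\varphi(n) = n \prod_{i=1}^s (1 - 1/p_i)$, dividing both sides of $2^{s-1}\varphi(n) > n$ by $n$ shows that the claim is equivalent to $2^{s-1} \prod_{i=1}^s (1 - 1/p_i) > 1$. Writing $2^{s-1} = 2^s/2$ and absorbing one factor of $2$ into each of the $s$ terms, this is in turn equivalent to
\[
\prod_{i=1}^s \frac{2(p_i - 1)}{p_i} > 2.
\]
Note that the exponents $\alpha_i$ are irrelevant; only the set of prime divisors enters.

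First I would record the elementary fact that the per-prime factor $h(p) = 2(p-1)/p = 2 - 2/p$ is strictly increasing in $p$, with $h(2) = 1$, $h(3) = 4/3$ and $h(5) = 8/5$. Two consequences will carry the whole proof: $h(p) \geq 1$ for every prime $p$, so inserting any additional factor never drops the product below its current value; and $h(p) \geq 4/3$ for every odd prime.

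The key structural observation is that, under either hypothesis, the primes $p_1 < \cdots < p_s$ contain at least two odd primes. Indeed, if $s = 2$ and $p_1 > 2$ then both are odd; if $s \geq 3$ and $p_1 = 2$ then $p_2, \ldots, p_s$ supply $s - 1 \geq 2$ odd primes; and if $s \geq 3$ and $p_1 > 2$ then all $s \geq 3$ primes are odd. Because these odd primes are \emph{distinct}, the two smallest of them are at least $3$ and at least $5$, so by monotonicity of $h$ their contributions are at least $h(3) = 4/3$ and $h(5) = 8/5$, whose product is $32/15 > 2$. Every remaining factor in the product is $\geq 1$, so the entire product exceeds $2$, which is exactly the desired inequality.

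I do not expect any real obstacle; the only point requiring care is to resist the naive bound $\prod h(p_i) \geq (4/3)^{s-1}$, which for $s = 3$ yields merely $16/9 < 2$ and is too weak. It is precisely the distinctness of the primes — forcing the two smallest odd ones to be genuinely $\geq 3$ and $\geq 5$ rather than both $\geq 3$ — that pushes the bound past $2$. Folding the three cases into the single remark ``at least two odd prime divisors'' keeps the argument short.
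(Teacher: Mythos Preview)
Your argument is correct. The reduction to $\prod_{i=1}^s 2(p_i-1)/p_i > 2$ is clean, and the key point---that under either hypothesis the primes $p_1 < \cdots < p_s$ contain at least two distinct odd primes, hence at least a factor of $h(3)h(5)=32/15>2$, with all other factors $\ge 1$---is exactly right. You also correctly flag why the naive $(4/3)^{s-1}$ bound is too weak for $s=3$.

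As for comparison: the paper does not actually prove this lemma; it merely quotes it from \cite{kn:Ilic09} and uses it as a black box. So there is no in-paper proof to compare against. Your self-contained argument is a perfectly good replacement for the citation and would make the paper marginally more self-contained.
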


\section{Ramanujan unitary Cayley graphs}
\label{sec:ram}

In this section we characterise Ramanujan unitary Cayley graphs, as stated in the following two theorems.

\begin{thm}\label{SXnThm1}
Let $R$ be a finite local ring with maximal ideal $M$ of order $m$. Then $G_R$ is Ramanujan if and only if one of the following holds:
\begin{itemize}
\item[\rm (a)] $|R|=2m$;
\item[\rm (b)] $|R|\geq\left(\dfrac{m}{2}+1\right)^2$ \emph{and} $m \neq 2$;
\end{itemize}
\end{thm}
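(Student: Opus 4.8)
The plan is to read the spectrum of $G_R$ directly off the local-ring case of Lemma~\ref{ringspectrum} and to pin down the quantity $\lambda(G_R)$ that governs the Ramanujan condition. Writing $r = |R^\times| = |R| - m$ for the degree, the eigenvalues of $G_R$ are $r$ (once), $-m$ (with multiplicity $|R^\times|/m$), and $0$. Since the residue field $R/M$ has at least two elements, $|R|/m \ge 2$, so $|R| \ge 2m$ always, with equality exactly when $R/M \cong \mathbb{F}_2$. The first thing I would settle is the value of $\lambda(G_R)$, the largest absolute value of an eigenvalue other than $\pm r$: if $|R| = 2m$, then $-m = -(|R|-m) = -r$ is itself one of $\pm r$ and is discarded, leaving only $0$, so $\lambda(G_R) = 0$; if $|R| > 2m$, then $-m \ne \pm r$ and $\lambda(G_R) = m$.

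Once $\lambda(G_R)$ is identified, the characterisation reduces to an elementary inequality. In the case $|R| = 2m$ we have $\lambda(G_R) = 0 \le 2\sqrt{r-1}$ (note $r = m \ge 1$), so $G_R$ is automatically Ramanujan, which is alternative (a). In the case $|R| > 2m$ we have $\lambda(G_R) = m$, and $G_R$ is Ramanujan if and only if $m \le 2\sqrt{r-1} = 2\sqrt{|R|-m-1}$; squaring and rearranging, this is equivalent to $(m+2)^2 \le 4|R|$, i.e.\ $|R| \ge (\frac{m}{2}+1)^2$. At this stage the Ramanujan property is equivalent to the raw statement ``$|R| = 2m$, or $|R| > 2m$ and $|R| \ge (\frac{m}{2}+1)^2$.''

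It remains to reconcile this with the stated form, and the only delicate point is the role of $m \neq 2$. The key numerical fact is that $(\frac{m}{2}+1)^2 - 2m = \frac14(m-2)^2 \ge 0$, with equality if and only if $m = 2$; hence for $m \neq 2$ the bound $|R| \ge (\frac{m}{2}+1)^2$ already forces $|R| > 2m$, and the two clauses collapse cleanly into alternatives (a) and (b). The remaining task, which I expect to be the real obstacle, is to rule out that $m = 2$ contributes a genuinely new Ramanujan graph outside (a); equivalently, to show there is no finite local ring with $m = 2$ and $|R| > 4$. I would prove this from the structure of finite local rings rather than from Lemma~\ref{localmaximal} alone (which only gives that $|R|$, $m$, $|R|/m$ are powers of a common prime, and so cannot by itself exclude $m=2,\ |R|=8$). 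Since $R$ is Artinian local, $M$ is nilpotent and each quotient $M^i/M^{i+1}$ is a vector space over $R/M$ of order $q = |R|/m$; telescoping gives $m = |M| = \prod_{i\ge 1}\lvert M^i/M^{i+1}\rvert$, a power of $q$. As $q \ge 2$, the equation $q^t = m = 2$ forces $q = 2$ and $t = 1$, whence $|R| = mq = 4 = 2m$. Thus every local ring with $m = 2$ already falls under alternative (a), which justifies excluding $m = 2$ from alternative (b) and closes both directions of the equivalence.
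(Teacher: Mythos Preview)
Your argument is correct and follows essentially the same path as the paper: both read off the spectrum from Lemma~\ref{ringspectrum}, split into the cases $|R|=2m$ and $|R|>2m$, and reduce the Ramanujan condition in the latter case to $|R|\ge(\tfrac{m}{2}+1)^2$, then observe that $(\tfrac{m}{2}+1)^2\ge 2m$ with equality iff $m=2$. The only divergence is in disposing of $m=2$: the paper invokes Ganesan's classification of local rings whose maximal ideal has two elements, whereas you give a self-contained argument (via the filtration $M\supseteq M^2\supseteq\cdots$ and the $R/M$-vector-space structure of the graded pieces) showing that $m$ must be a power of $q=|R|/m$, hence $m=2$ forces $|R|=4$; your route avoids an external citation and, as you note, is genuinely needed since Lemma~\ref{localmaximal} alone does not exclude $m=2,\ |R|=8$.
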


\begin{proof}
Recall that $G_R$ is regular of degree $|R^\times|$, and that $R^{\times} = R \setminus M$ as $R$ is local. Note that $|R| \ge 2m$. If $|R|=2m$, then by Lemma \ref{ringspectrum},
\begin{eqnarray*}
\Spec (G_R)
=\left(\begin{array}{ccc}
|R^\times| & -|R^\times|  & 0 \\
1   & 1 & |R|-2
\end{array}
\right)
\end{eqnarray*}
and so $G_R$ is Ramanujan.

Assume $|R|>2m$. Then Lemma \ref{ringspectrum} implies that $G_R$ is Ramanujan if and only if
$m \leq 2\sqrt{|R^\times|-1}$, or equivalently, $|R|\geq\left((m/2)+1\right)^2$.
Note that we always have $\left((m/2)+1\right)^2 \ge 2m$, with equality precisely when $m=2$. In the case when $m=2$, it is well known \cite{kn:Ganesan64} that either $R\cong\mathbb{Z}_4$ or $R\cong\mathbb{Z}_2[X]/(X^2)$. In either case we have $|R|=4$, which contradicts $|R|>4$. \qed \end{proof}

\begin{thm}\label{XnThm1}
Let $R$ be as in Assumption \ref{as:1} with $s \ge 2$. Then $G_R$ is Ramanujan if and only if $R$ satisfies one of the following conditions:
\begin{itemize}
\item[\rm (a)] $R_i/M_i\cong  \mathbb{F}_2$ for $i = 1, \ldots, s$;

\item[\rm (b)] $R_i \cong  \mathbb{F}_2$ for $i = 1, \ldots, s-3$, and  $R_i \cong  \mathbb{F}_3$ for $i = s-2, s-1, s$;

\item[\rm (c)] $R_i \cong  \mathbb{F}_2$ for $i = 1, \ldots, s-3$, $R_i \cong  \mathbb{F}_3$ for $i = s-2, s-1$, and $R_s\cong\mathbb{F}_4$;

\item[\rm (d)] $R_i \cong  \mathbb{F}_2$ for $i = 1, \ldots, s-3$, and  $R_i \cong  \mathbb{F}_4$ for $i = s-2, s-1, s$;

\item[\rm (e)] $R_i \cong  \mathbb{F}_2$ for $i = 1, \ldots, s-2$, $R_{s-1}\cong  \mathbb{F}_3$, and $R_{s}\cong  \mathbb{Z}_9 $ or $\mathbb{Z}_3[X]/(X^3)$;

\item[\rm (f)] $R_1\cong\mathbb{Z}_4$ or $\mathbb{Z}_2[X]/(X^2)$, $R_i \cong  \mathbb{F}_2$ for $i = 2, \ldots, s-2$, and $R_{s-1}\cong  \mathbb{F}_{q_1}$, $R_{s}\cong  \mathbb{F}_{q_2}$ for some prime powers $q_1, q_2 \ge 3$ such that
\begin{equation}
\label{eq:f}
q_1 \le q_2\leq q_1+\sqrt{(q_1-2)q_1};
\end{equation}

\item[\rm (g)] $R_i \cong  \mathbb{F}_2$ for $i = 1, \ldots, s-2$, and $R_{s-1}\cong  \mathbb{F}_{q_1}$, $R_{s}\cong  \mathbb{F}_{q_2}$ for some prime powers $q_1, q_2 \geq3$ such that
\begin{equation}
\label{eq:g}
q_1 \le q_2\leq2\left(q_1+\sqrt{(q_1-2)q_1}\right)-1;
\end{equation}

\item[\rm (h)] $R_i/M_i\cong  \mathbb{F}_2$ for $i = 1, \ldots, s-1$, $R_{s}/M_{s}\cong  \mathbb{F}_q$ for some prime power $q\geq3$, and
\begin{equation}
\label{eq:h}
\prod_{i=1}^s m_i \leq2\left(q-1+\sqrt{(q-2)q}\right).
\end{equation}
\end{itemize}
\end{thm}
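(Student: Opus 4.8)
The plan is to reduce the Ramanujan condition to a single inequality and then run a short finite case analysis. Write $q_i = |R_i|/m_i = |R_i/M_i|$ for the order of the $i$-th residue field, so that $|R^\times_i|/m_i = q_i - 1$ and, by \eqref{eq:basic}, $|R^\times| = M \prod_{i=1}^s (q_i - 1)$ where $M := \prod_{i=1}^s m_i$. By Lemma \ref{ringspectrum} the nonzero eigenvalues of $G_R$ are the numbers $\lambda_C = (-1)^{|C|}|R^\times| / \prod_{j \in C}(q_j - 1)$; such a $\lambda_C$ equals $\pm |R^\times|$ exactly when $q_j = 2$ for every $j \in C$, and in every other case $|\lambda_C| = |R^\times| / \prod_{j \in C}(q_j - 1) < |R^\times|$. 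Minimising the denominator over the remaining sets $C$ (take $C$ to consist of a single index with $q_j \ge 3$ of least value, together with any indices having $q_j = 2$), I find that the eigenvalue of largest absolute value other than $\pm |R^\times|$ equals $|R^\times| / (p_1 - 1)$, where $p_1$ denotes the smallest residue order $q_i$ that is at least $3$. If every $q_i = 2$ there is no such eigenvalue and $G_R$ is trivially Ramanujan; this is precisely case (a). So from now on I assume at least one $q_i \ge 3$ and list the residue orders that are $\ge 3$ as $3 \le p_1 \le p_2 \le \cdots \le p_k$.

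With this notation $G_R$ is Ramanujan if and only if $|R^\times|/(p_1 - 1) \le 2\sqrt{|R^\times| - 1}$. Both sides are positive, so I square, use $|R^\times| = M (p_1 - 1) \prod_{j=2}^k (p_j - 1)$, and solve the resulting quadratic in $|R^\times|$; the lower root translates into a lower bound on $M \prod_{j=2}^k(p_j - 1)$ that never exceeds $1$ and is therefore automatic, while the upper root gives the clean condition
\[
M \prod_{j=2}^{k} (p_j - 1) \;\le\; 2\left(p_1 - 1 + \sqrt{(p_1 - 2)p_1}\right),
\]
the empty product for $k = 1$ being read as $1$. To bound $k$, note that the left-hand side is at least $(p_1 - 1)^{k-1}$ while the right-hand side is strictly less than $4(p_1 - 1)$ because $\sqrt{(p_1-2)p_1} < p_1 - 1$; hence $(p_1 - 1)^{k-2} < 4$, which forces $k \le 3$ and, when $k = 3$, also $p_1 \le 4$.

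It then remains to enumerate the admissible data $(M; p_1, \dots, p_k)$ for each $k$ and to identify the corresponding rings. For $k = 1$ the inequality is $M \le 2(q - 1 + \sqrt{(q-2)q})$ with $q = p_1$, which is case (h). For $k = 2$, dividing the displayed inequality by $p_2 - 1 \ge p_1 - 1$ gives $M < 4$, so $M \in \{1, 2, 3\}$, yielding cases (g), (f) and (e) respectively. For $k = 3$ one checks directly that necessarily $M = 1$ and $(p_1, p_2, p_3) \in \{(3,3,3),(3,3,4),(4,4,4)\}$, giving cases (b), (c) and (d). The ``if'' direction then follows by substituting each configuration back into the displayed inequality and verifying it, using identities such as $(p_1 - 1)^2 - 1 = (p_1 - 2)p_1$ to recognise the square roots in (e)--(h).

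I expect the realizability step to be the main obstacle: knowing the value of $M$ is not enough, since one must determine which local rings actually produce it. The key structural input is that in a finite local ring $R_i$ the order $m_i$ of the maximal ideal is a power of the residue order $q_i$ (each quotient $M_i^{t}/M_i^{t+1}$ is an $\mathbb{F}_{q_i}$-vector space), a refinement of Lemma \ref{localmaximal}. This forces a factor contributing $m_i = 2$ to satisfy $q_i = 2$, hence to be one of the two local rings of order $4$, namely $\mathbb{Z}_4$ or $\mathbb{Z}_2[X]/(X^2)$ (case (f)); and it forces a factor contributing $m_i = 3$ to satisfy $q_i = 3$ and have order $9$ (case (e)), which via the inequality in turn pins down $p_1 = p_2 = 3$. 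Without this arithmetic restriction the case $k = 2$, $M = 2$ would spuriously admit a nonexistent local ring of order $8$ with residue field $\mathbb{F}_4$, so ruling such phantoms out is exactly where the structure of finite local rings must be invoked.
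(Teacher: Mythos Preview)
Your proof is correct and follows essentially the same route as the paper: both identify the critical eigenvalue as $|R^\times|/(p_1-1)$ with $p_1$ the smallest residue-field order $\ge 3$, rewrite the Ramanujan condition as the single inequality $M\prod_{j\ge 2}(p_j-1)\le 2\bigl(p_1-1+\sqrt{(p_1-2)p_1}\bigr)$, and then split on the number $k=s-t$ of factors with $q_i\ge 3$. Your explicit remark that $m_i$ is a power of $q_i$ (via the filtration $M_i^t/M_i^{t+1}$), rather than merely a power of the same prime as in Lemma~\ref{localmaximal}, is genuinely needed to exclude the phantom local ring of order $8$ with residue field $\mathbb{F}_4$; the paper uses this implicitly when it asserts ``note that in this case $t\ge 1$'' in its Case~2.2.2.
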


\begin{proof}
Note that $|R_i|/m_i \geq 2$, $1 \le i \le s$, and the degree of $G_R$ is given in (\ref{eq:basic}).

\smallskip
\noindent\emph{Case 1:} $|R_1|/m_1=|R_2|/m_2=\cdots=|R_s|/m_s=2$. In this case all non-zero eigenvalues of $G_R$ have absolute value $|\lambda_C|=|R^\times|=|R|/2^s$,
which implies that $G_R$ is Ramanujan, as claimed in (a).

\smallskip
\noindent\emph{Case 2:} There exists at least one $j\in\{1,2,\ldots,s\}$ such that $|R_j|/m_j>2$. Let $t+1$ be the largest $j$ such that this occurs, so that $0 \le t < s$ and
\begin{equation}
\label{ratiosequence}
2=|R_1|/m_1=\cdots=|R_t|/m_t<|R_{t+1}|/m_{t+1} \leq \cdots \leq |R_{s}|/m_{s},
\end{equation}
where by convention if $t=0$ then all $|R_i|/m_i>2$. Since $G_R$ is $|R^\times|$-regular, it is Ramanujan if and only if $|\lambda_C| \leq2\sqrt{|R^\times|-1}$ for all eigenvalues $\lambda_C \neq\pm|R^\times|$ of $G_R$.
Note that $|\lambda_C| < |R^\times|$ is maximized if and only if $\prod_{j\in C}\left(|R_j|/m_j-1\right)$ is minimized.
If $C \subseteq \{1, \ldots, t\}$, then $|\lambda_C| = |R^\times|$. If $C \cap \{t+1, \ldots, s\} \ne \emptyset$, then
$|\lambda_C| = |\lambda_{C \cap \{t+1, \ldots, s\}}| \le |\lambda_{\{t+1\}}|$.
Thus $G_R$ is Ramanujan if and only if $|\lambda_{\{t+1\}}| \leq2\sqrt{|R^\times|-1}$, that is,
\begin{equation}\label{Rcondition}
\dfrac{|R^\times|}{(|R_{t+1}|/m_{t+1})-1}\leq2\sqrt{|R^\times|-1}.
\end{equation}
Since $2\sqrt{|R^\times|-1} < 2\sqrt{|R^\times|}$, this condition is not satisfied unless
\begin{eqnarray}\label{RNescondition}
|R^\times| <4\left((|R_{t+1}|/m_{t+1})-1\right)^2.
\end{eqnarray}
In particular, if $s\geq t+4$, then $|R^\times| \ge \prod_{i=t+1}^s\left((|R_{i}|/m_{i})-1\right)$ $\geq$ $4\left((|R_{t+1}|/m_{t+1})-1\right)^2$ by (\ref{eq:basic}) and (\ref{ratiosequence}), and hence $G_R$ is not Ramanujan. It remains to consider the case where $s-3 \le t < s$.

\smallskip
\noindent\emph{Case 2.1:} $s=t+3$. In view of (\ref{eq:basic}), in this case (\ref{RNescondition}) is mounted to
$$
\prod_{i=1}^{t+3} m_i \left((|R_{t+2}|/m_{t+2})-1\right)\left((|R_{t+3}|/m_{t+3})-1\right) < 4\left((|R_{t+1}|/m_{t+1})-1\right).
$$
Note that if $\prod_{i=1}^{t+3} m_i \geq 2$ or $|R_{t+3}|/m_{t+3} \geq 5$, then this condition is not satisfied and hence $G_R$ is not Ramanujan. Now we assume $\prod_{i=1}^{t+3} m_i=1$ and $|R_{t+3}|/m_{t+3}\leq4$. Then one of the following occurs: (i) $|R_{t+1}|=|R_{t+2}|=|R_{t+3}|=3$; (ii) $|R_{t+1}|=|R_{t+2}|=3$ and $|R_{t+3}|=4$; (iii) $|R_{t+1}|=|R_{t+2}|=|R_{t+3}|=4$; (iv) $|R_{t+1}|=3$ and $|R_{t+2}|=|R_{t+3}|=4$. In cases (i)-(iii), (\ref{Rcondition}) is satisfied and so $G_R$ is Ramanujan as claimed in (b), (c) and (d); whilst in (iv), (\ref{Rcondition}) is not satisfied and so $G_R$ is not Ramanujan.

\smallskip
\noindent\emph{Case 2.2:} $s=t+2$. In this case (\ref{RNescondition}) is mounted to
$$
\prod_{i=1}^{t+2} m_i \left((|R_{t+2}|/m_{t+2})-1\right) < 4\left((|R_{t+1}|/m_{t+1})-1\right).
$$
Thus, if $\prod_{i=1}^{t+2} m_i \geq4$, then $G_R$ is not Ramanujan. Assume $\prod_{i=1}^{t+2} m_i \leq3$ in the sequel.

\smallskip
\noindent\emph{Case 2.2.1:} $\prod_{i=1}^{t+2} m_i=3$. Then (\ref{Rcondition}) is mounted to
\[
3((|R_{t+2}|/m_{t+2})-1)\leq2\sqrt{3((|R_{t+1}|/m_{t+1})-1)((|R_{t+2}|/m_{t+2})-1)-1},
\]
which is equivalent to
\begin{eqnarray}\label{M3Case221}
|R_{t+2}|/m_{t+2}\leq\frac{2}{3}\left((|R_{t+1}|/m_{t+1})+\sqrt{((|R_{t+1}|/m_{t+1})-2)(|R_{t+1}|/m_{t+1})}\right)+\frac{1}{3}.
\end{eqnarray}
Note that by Lemma \ref{localmaximal} and (\ref{ratiosequence}), we have $m_i=1$, $1 \le i \le t$, and $(m_{t+1}, m_{t+2})=(1,3)$ or $(3,1)$. It is well known \cite{kn:Ganesan64} that $\mathbb{Z}_9$ and $\mathbb{Z}_3[X]/(X^3)$ are the only local rings whose unique maximal ideal has exactly three elements. Thus, one of the following holds: (i) $R_{t+1}\cong  \mathbb{F}_q$, and $R_{t+2}\cong  \mathbb{Z}_9$ or $\mathbb{Z}_3[X]/(X^3)$; (ii) $R_{t+1}\cong  \mathbb{Z}_9$ or $\mathbb{Z}_3[X]/(X^3)$, and $R_{t+2}\cong  \mathbb{F}_q$, where $q\geq3$ is a prime power. In case (i), by (\ref{ratiosequence}), we have $q=3$, and as stated in (e), $G_R$ is Ramanujan since (\ref{M3Case221}) is satisfied. In case (ii), (\ref{M3Case221}) is satisfied only when $q=3$, and in this case $G_R$ is Ramanujan as stated in (e).

\smallskip
\noindent\emph{Case 2.2.2:} $\prod_{i=1}^{t+2}m_i=2$. Then (\ref{Rcondition}) is mounted to
\[(|R_{t+2}|/m_{t+2})-1\leq\sqrt{2((|R_{t+1}|/m_{t+1})-1)((|R_{t+2}|/m_{t+2})-1)-1},\]
which is equivalent to
\begin{equation}
\label{eq:added}
|R_{t+2}|/m_{t+2}\leq (|R_{t+1}|/m_{t+1})+\sqrt{((|R_{t+1}|/m_{t+1})-2)(|R_{t+1}|/m_{t+1})}.
\end{equation}
Similar to Case 2.2.1, we have, say, $m_1=2$ and $m_i=1$, $2 \le i \le s$ (note that in this case $t\geq1$). Since $\mathbb{Z}_4$ and $\mathbb{Z}_2[X]/(X^2)$ are the only local rings whose unique maximal ideal has order two \cite{kn:Ganesan64}, we have $R_1\cong\mathbb{Z}_4$ or $\mathbb{Z}_2[X]/(X^2)$, $R_i \cong  \mathbb{F}_2$, $2 \le i \le t$, $R_{t+1}\cong  \mathbb{F}_{q_1}$ and $R_{t+2}\cong  \mathbb{F}_{q_2}$, where $q_1, q_2 \ge 3$ are prime powers. By (\ref{eq:added}), if $q_1 \le q_2 \leq q_1+\sqrt{(q_1-2)q_1}$, then $G_R$ is Ramanujan as claimed in (f).

\smallskip
\noindent\emph{Case 2.2.3:} $\prod_{i=1}^{t+2} m_i = 1$. Then all $R_i$ are finite fields. Thus, $R_i \cong  \mathbb{F}_2$, $1 \le i \le t$, $R_{t+1}\cong  \mathbb{F}_{q_1}$ and $R_{t+2}\cong  \mathbb{F}_{q_2}$, where $q_2 \ge q_1 \ge 3$ are prime powers. By (\ref{Rcondition}), $G_R$ is Ramanujan if and only if $q_2-1\leq2\sqrt{(q_1-1)(q_2-1)-1}$, which is equivalent to
$q_2\leq2\left(q_1+\sqrt{(q_1-2)q_1}\right)-1$, yielding (g).

\smallskip
\noindent\emph{Case 2.3:}  $s=t+1$. Then $R_i/M_i\cong  \mathbb{F}_2$, $1 \le i \le t$, $R_{t+1}/M_{t+1}\cong  \mathbb{F}_q$, and $|R^\times|=\prod_{i=1}^{t+1} m_i (q-1)$, where $q \ge 3$ is a prime power. Thus (\ref{Rcondition}) is mounted to $\prod_{i=1}^{t+1} m_i \leq 2\left(q-1+\sqrt{(q-2)q}\right)$, and in this case $G_R$ is Ramanujan as stated in (h).
\qed
\end{proof}

\begin{rem}
\label{rem:3}
{\em (a) It is known \cite{kn:Akhtar09} that, if $R$ is a local ring with maximal ideal $M$, then $G_R$ is a complete multipartite graph whose partite sets are the cosets of $M$ in $R$ (in particular, $G_R$ is a complete graph when $|M|=1$). Thus, since $G_R=\otimes_{i=1}^sG_{R_i}$, in each case of Theorem \ref{XnThm1}, $G_R$ is a tensor product whose factor graphs are complete or complete multipartite.

(b) It is well known \cite{kn:Murty03} that for an $r$-regular graph $G$ the multiplicity of $r$ as an eigenvalue is equal to the number of connected components of $G$. Thus $G_R$ in Theorem \ref{SXnThm1} is always connected. In Theorem \ref{XnThm1}, $G_R$ is connected if and only if there is at most one factor $R_i$ such that $R_i/M_i\cong\mathbb{F}_2$. In particular, Theorem \ref{XnThm1} gives four infinite families of connected Ramanujan graphs: (i) $C_4 \otimes K_{q_1} \otimes K_{q_2}$, with $q_1, q_2 \ge 3$ prime powers satisfying (\ref{eq:f}); (ii)-(iii) $K_2 \otimes K_{q_1} \otimes K_{q_2}$ and $K_{q_1} \otimes K_{q_2}$, with $q_1, q_2 \ge 3$ prime powers satisfying (\ref{eq:g}); (iv) $K_{m_1, m_1} \otimes K_{m_2, \ldots, m_2}$, where $K_{m_2, \ldots, m_2}$ has $q$ parts for a prime power $q$, and $m_1 m_2 \leq 2(q-1+\sqrt{(q-2)q})$.}
\end{rem}

Let $n=p_1^{\alpha_1}p_2^{\alpha_2}\cdots p_s^{\alpha_s}$ be the canonical factorisation of an integer $n$, where $p_1<p_2<\cdots<p_s$ are primes. It is well known (see e.g.~\cite{kn:Dummit03}) that
$$
\mathbb{Z}/n\mathbb{Z}\cong(\mathbb{Z}/p_1^{\alpha_1}\mathbb{Z})\times(\mathbb{Z}/p_2^{\alpha_2}\mathbb{Z})\times\cdots\times(\mathbb{Z}/p_s^{\alpha_s}\mathbb{Z}),
$$
where each $R_i=\mathbb{Z}/p_i^{\alpha_i}\mathbb{Z}$ is a local ring with unique maximal ideal $M_i = (p_i)/(p_i^{\alpha_i})$ of order $m_i=|M_i|=p_i^{\alpha_i-1}$. In the special case where $R = \mathbb{Z}/n\mathbb{Z}$, Theorems \ref{SXnThm1} and \ref{XnThm1} together imply the following known result.

\begin{cor}
\label{XnCor1}
\emph{\cite[Theorem 1.2]{kn:Droll10}}
Let $n=p_1^{\alpha_1}p_2^{\alpha_2}\cdots p_s^{\alpha_s}$ be as above. Then $G_{\mathbb{Z}/n\mathbb{Z}}$ is Ramanujan if and only if one of the following holds:
\begin{itemize}
\item[\rm (a)] $n=2^{\alpha_1}$  with $\alpha_1\geq1$;
\item[\rm (b)] $n=p_1^{\alpha_1}$ with $p_1$ odd and $\alpha_1=1,2$;
\item[\rm (c)] $n=4p_2p_3$ with $p_2 < p_3\leq 2p_2-3$;
\item[\rm (d)] $n=p_1p_2$ with $3 \le p_1 < p_2\leq 4p_1-5$, or $n=2p_2p_3$ with $3 \le p_2 < p_3\leq 4p_2-5$;
\item[\rm (e)] $n=2p_2^2,~4p_2^2$ with $p_2$ odd, or $n=2^{\alpha_1}p_2$ with $p_2>2^{\alpha_1-3}+1$.
\end{itemize}
\end{cor}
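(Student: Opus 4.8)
The plan is to specialise Theorems \ref{SXnThm1} and \ref{XnThm1} to $R=\mathbb{Z}/n\mathbb{Z}$, using the decomposition $R\cong\prod_{i=1}^s\mathbb{Z}/p_i^{\alpha_i}\mathbb{Z}$ recalled just before the statement. Each local factor $R_i=\mathbb{Z}/p_i^{\alpha_i}\mathbb{Z}$ has residue field $R_i/M_i\cong\mathbb{F}_{p_i}$ and maximal ideal of order $m_i=p_i^{\alpha_i-1}$, so that $|R_i|/m_i=p_i$. Since the $p_i$ are distinct primes, the ordering demanded by Assumption \ref{as:1} coincides with $p_1<p_2<\cdots<p_s$, and, crucially, no two local factors share the same residue characteristic. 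I would split the argument into $s=1$ and $s\ge 2$.

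For $s=1$ I would apply Theorem \ref{SXnThm1} with $m=p_1^{\alpha_1-1}$ and $|R|=p_1m$. Condition (a) there, $|R|=2m$, becomes $p_1=2$, which yields part (a) of the corollary for every $\alpha_1\ge 1$. Condition (b), namely $|R|\ge (m/2+1)^2$ together with $m\neq 2$, I would rewrite as $4(p_1-1)m\ge m^2+4$; a short check on $\alpha_1$ shows that for $p_1=2$ this forces $m=2$ (hence is excluded and already accounted for by (a)), while for $p_1$ odd it holds precisely when $\alpha_1\in\{1,2\}$ and fails once $m=p_1^{\alpha_1-1}\ge p_1^2$. This produces part (b).

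For $s\ge 2$ I would run through cases (a)--(h) of Theorem \ref{XnThm1}. The key observation is that cases (a)--(e) each require at least two local factors whose residue fields share a characteristic (two or more copies of $\mathbb{F}_2$, of $\mathbb{F}_3$, of $\mathbb{F}_4$, or an $\mathbb{F}_3$ together with a ring of residue field $\mathbb{F}_3$); since the $p_i$ are distinct and every field arising is $\mathbb{F}_{p_i}$ (the fields $\mathbb{F}_{p^k}$ with $k\ge 2$, and the rings $\mathbb{Z}_2[X]/(X^2)$, $\mathbb{Z}_3[X]/(X^3)$, never occur as factors of $\mathbb{Z}/n\mathbb{Z}$), these cases are vacuous. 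Hence only (f), (g), (h) survive, and the distinctness constraint forces: case (f) to have $s=3$ with $R_1\cong\mathbb{Z}_4$, giving $n=4p_2p_3$; case (g) to have $s\in\{2,3\}$, giving $n=p_1p_2$ with both primes odd, or $n=2p_2p_3$; and case (h) to have $s=2$, giving $n=2^{\alpha_1}p_2^{\alpha_2}$.

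It remains to convert the analytic inequalities (\ref{eq:f}), (\ref{eq:g}), (\ref{eq:h}) into the stated integer conditions, and this is where I expect the main effort to lie. For (\ref{eq:f}) and (\ref{eq:g}), with odd prime argument $q_1=p$, I would bound $p-2<\sqrt{(p-2)p}<p-1$ and then exploit the oddness of the primes to sharpen $q_2\le p+\sqrt{(p-2)p}$ to $q_2\le 2p-3$ and $q_2\le 2(p+\sqrt{(p-2)p})-1$ to $q_2\le 4p-5$, yielding parts (c) and (d). For (\ref{eq:h}) in case (h), I would substitute $\prod_{i=1}^s m_i=2^{\alpha_1-1}p_2^{\alpha_2-1}$ and analyse the pair $(\alpha_1,\alpha_2)$: when $\alpha_2=1$ the inequality reduces to $p_2\ge (2^{\alpha_1-2}+1)^2/2^{\alpha_1-1}=2^{\alpha_1-3}+1+2^{-(\alpha_1-1)}$, which by integrality is exactly $p_2>2^{\alpha_1-3}+1$; when $\alpha_2=2$ it holds iff $\alpha_1\in\{1,2\}$ (giving $n=2p_2^2,\,4p_2^2$); and it fails whenever $\alpha_2\ge 3$, or $\alpha_2=2$ and $\alpha_1\ge 3$. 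These give part (e). The most delicate point throughout is the handling of the square-root terms: one must pin down the integer part of $\sqrt{(p-2)p}$ and combine it with the parity of the odd primes to recover the sharp bounds, rather than merely estimating.
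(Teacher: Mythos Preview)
Your proposal is correct and follows essentially the same route as the paper's own proof: specialise Theorems \ref{SXnThm1} and \ref{XnThm1} to $\mathbb{Z}/n\mathbb{Z}$, observe that cases (a)--(e) of Theorem \ref{XnThm1} are vacuous because the local factors $\mathbb{Z}/p_i^{\alpha_i}\mathbb{Z}$ have pairwise distinct residue characteristics (and $\mathbb{F}_4$, $\mathbb{Z}_2[X]/(X^2)$, $\mathbb{Z}_3[X]/(X^3)$ never arise), and then convert the inequalities (\ref{eq:f})--(\ref{eq:h}) into the stated integer bounds. The paper simply asserts each equivalence (e.g.\ ``(\ref{eq:f}) is equivalent to $p_3\le 2p_2-3$'') without spelling out the bounding of $\sqrt{(p-2)p}$ and the parity step, so your plan is in fact slightly more explicit than the paper on precisely the point you flagged as delicate.
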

\begin{proof}
If $n=2^{\alpha_1}$, then $R_1=\mathbb{Z}/2^{\alpha_1}\mathbb{Z}$, $|R_1|/m_1=2$, and $G_{\mathbb{Z}/n\mathbb{Z}}$ is Ramanujan by (a) of Theorem \ref{SXnThm1}.

If $n=p_1^{\alpha_1}$ with $p_1$ odd, then $R_1=\mathbb{Z}/p^{\alpha_1}\mathbb{Z}$ and $m_1=p^{\alpha_1-1}$. In this case, by (b) of Theorem \ref{SXnThm1} $G_R$ is Ramanujan if and only if $p^{\alpha_1}\geq\left((p^{\alpha_1-1}/2)+1\right)^2$, which holds if and only if $\alpha_1=1$ or $2$.

Now we assume $s \ge 2$. It can be easily verified that none of (a)-(e) in Theorem \ref{XnThm1} can occur. In (f) of Theorem \ref{XnThm1}, we have $s=3$, $\prod_{i=1}^3 m_i = 2$, and hence $p_1=2$ and $n=4p_2p_3$. The second inequality in (\ref{eq:f}) is equivalent to $p_3\leq2p_2-3$, yielding (c).

In (g) of Theorem \ref{XnThm1}, we have $s=2$ or $3$, and $\prod_{i=1}^s m_i = 1$, implying $\alpha_1= \cdots = \alpha_s=1$.  If $s=2$, then $n=p_1p_2$ with $p_1 \geq3$, and (\ref{eq:g}) is equivalent to $p_2\leq 4p_1-5$, leading to the first possibility in (d). Similarly, if $s=3$, then $n=2p_2p_3$ with $p_2\geq3$, leading to the second possibility in (d).

In (h) of Theorem \ref{XnThm1}, we have $s=2$ and $n = 2^{\alpha_1} p_2^{\alpha_2}$ with $p_2\geq3$. The inequality (\ref{eq:h}) is mounted to $2^{\alpha_1-1}p_2^{\alpha_2-1}\leq2\left(p_2-1+\sqrt{p_2(p_2-2)}\right)$, which holds only if
$2^{\alpha_1-1}p_2^{\alpha_2-1}<4p_2-4$.
This latter inequality holds only when $\alpha_2 = 1$ and $p_2 > 2^{\alpha_1 - 3} + 1$, $(\alpha_1, \alpha_2) = (1, 2)$, or $(\alpha_1, \alpha_2) = (2, 2)$, and in each of these cases (\ref{eq:h}) is satisfied, leading to (e).
\qed \end{proof}

\section{Ramanujan complements of unitary Cayley graphs}
\label{sec:ram-comp}

Corollary \ref{Corringspectrum} implies the following result.

\begin{thm}\label{ComSXnThm1}
Let $R$ be a finite local ring. Then $\overline{G}_R$ is Ramanujan.
\end{thm}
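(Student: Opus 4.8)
The plan is to read the spectrum of $\overline{G}_R$ straight off the local case of Corollary \ref{Corringspectrum} and then verify the Ramanujan inequality for the lone non-principal eigenvalue. Since $R$ is local with maximal ideal of order $m$, that corollary gives $\overline{G}_R$ exactly two distinct eigenvalues: $m-1$ with multiplicity $|R|/m$, and $-1$ with multiplicity $(|R|/m)(m-1)$. The graph $\overline{G}_R$ is regular of degree $m-1$; this follows from Lemma \ref{CXEigenvalues} (its degree is $|R|-|R^\times|-1=m-1$), or more concretely from the fact \cite{kn:Akhtar09} that $G_R$ is complete multipartite with the $|R|/m$ cosets of $M$ as parts, so that $\overline{G}_R$ is a disjoint union of $|R|/m$ copies of $K_m$. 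Writing $r=m-1$ for the degree, the Ramanujan threshold is $2\sqrt{r-1}=2\sqrt{m-2}$.

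Next I would single out the eigenvalues different from $\pm r=\pm(m-1)$. The top eigenvalue $m-1$ is exactly $r$, so the only value left to test is $-1$. I would then distinguish cases by the size of $m$. If $m\le 2$, then $-1$ coincides with $-r$ (for $m=2$ we have $r=1$; for $m=1$ the graph $\overline{G}_R$ is edgeless with every eigenvalue equal to $0=\pm r$), so no eigenvalue remains to be checked and $\overline{G}_R$ is Ramanujan trivially. If $m\ge 3$, then $-1\ne\pm r$ and the condition to verify is $|-1|\le 2\sqrt{m-2}$; but $1\le 2\sqrt{m-2}$ holds whenever $m\ge 3$, since then $m-2\ge 1$. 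Thus $\overline{G}_R$ is Ramanujan in every case.

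No step here is genuinely difficult: once Corollary \ref{Corringspectrum} hands us the two-point spectrum, the entire assertion collapses to the elementary inequality $1\le 2\sqrt{m-2}$ for $m\ge 3$, together with the bookkeeping of the small cases $m=1,2$. The only subtlety worth flagging is the convention governing which eigenvalues are deemed trivial: one must exclude $-r$ as well as $r$ from the test, and it is precisely this exclusion that disposes of the $m\le 2$ cases.
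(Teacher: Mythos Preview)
Your argument is correct and matches the paper's approach exactly: the paper's entire proof is the sentence ``Corollary \ref{Corringspectrum} implies the following result,'' and you have simply written out the verification that this corollary (giving eigenvalues $m-1$ and $-1$ for $\overline{G}_R$) yields the Ramanujan bound. The case split on $m\le 2$ versus $m\ge 3$ and the observation that $-1$ coincides with $-r$ when $m\le 2$ are the right details to supply.
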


In the general case where $s \ge 2$, we obtain the following:

\begin{thm}\label{ComXnThm222}
Let $R$ be as in Assumption \ref{as:1} with $s \ge 2$. Then $\overline{G}_R$ is Ramanujan if and only if $R$ satisfies one of the following conditions:
\begin{itemize}
\item[\rm (a)] $|R_i|/m_i=2$, $1 \le i \le s$, and
\begin{equation}
\label{eq:a}
\prod_{i=1}^s m_i \leq 2^{s+1}-3+2\sqrt{2^s(2^s-3)};
\end{equation}
\item[\rm (b)] $2=|R_1|/m_1=\cdots=|R_t|/m_t < |R_{t+1}|/m_{t+1}$ for some $t$ with $2 \le t < s$, and
$$
|R^\times|\leq2\sqrt{|R|}-3;
$$
\item[\rm (c)] $2=|R_1|/m_1 < |R_2|/m_2$ and
$$
|R^\times|\leq2\sqrt{|R|-2}-1;
$$
\item[\rm (d)] $3\leq|R_1|/m_1$ and
\begin{equation}
\label{eq:d}
\frac{|R^\times|}{(|R_1|/m_1)-1}\leq-\left(2(|R_1|/m_1)-3\right)+\sqrt{\left(2(|R_1|/m_1)-3\right)^2+(4|R|-9)}.
\end{equation}
\end{itemize}
\end{thm}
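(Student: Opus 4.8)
The plan is to start from Corollary~\ref{Corringspectrum}, which lists the eigenvalues of $\overline{G}_R$ as the degree $|R|-1-|R^\times|$, the values $-1-\lambda_C$ for nonempty $C\subseteq\{1,\dots,s\}$, and $-1$. Since $|R|-|R^\times|\ge3$ whenever $s\ge2$, the eigenvalue $-1$ satisfies $|-1|=1\le2\sqrt{|R|-|R^\times|-2}$, so it never obstructs the Ramanujan property. Hence, after checking that the two extreme values $-1-\lambda_C$ identified below are distinct from $\pm(|R|-1-|R^\times|)$, the graph $\overline{G}_R$ is Ramanujan if and only if
\[
\max_{\emptyset\neq C}|-1-\lambda_C|\le 2\sqrt{|R|-|R^\times|-2},
\]
and everything reduces to locating the nonempty $C$ maximising $|-1-\lambda_C|$.

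Writing $d_i=(|R_i|/m_i)-1=|R_i^\times|/m_i\ge1$ and $P=\prod_{i=1}^s m_i$, the definition of $\lambda_C$ gives $\lambda_C=(-1)^{|C|}|R^\times|/\prod_{j\in C}d_j$, so its sign is the parity of $|C|$ and the $-1$ shift behaves differently in the two parities: for $|C|$ odd, $|-1-\lambda_C|=|R^\times|/\prod_{j\in C}d_j-1$, whereas for $|C|$ even, $|-1-\lambda_C|=1+|R^\times|/\prod_{j\in C}d_j$. In both cases the value increases as $\prod_{j\in C}d_j$ decreases, so the first step is to minimise $\prod_{j\in C}d_j$ over nonempty $C$ of each fixed parity. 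I would carry this out in terms of the number $k$ of indices with $|R_i|/m_i=2$ (equivalently $d_i=1$), which by Assumption~\ref{as:1} are exactly $i=1,\dots,k$: the odd-parity minimum equals $1$ when $k\ge1$ and $d_1$ when $k=0$, while the even-parity minimum equals $1$ when $k\ge2$, $d_2$ when $k=1$, and $d_1d_2$ when $k=0$. This split according to $k$ is precisely the source of the four cases (a)--(d), corresponding to $k=s$, $2\le k<s$, $k=1$, $k=0$.

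With these minima, in each case I would form the odd candidate $|R^\times|/(\text{odd min})-1$ and the even candidate $1+|R^\times|/(\text{even min})$, take the larger, impose $\le2\sqrt{|R|-|R^\times|-2}$, and square to reach the stated inequality. When $k\ge2$ (cases (a),(b)) both minima are $1$, the even candidate $|R^\times|+1$ strictly dominates, and the condition collapses to $|R^\times|\le2\sqrt{|R|}-3$; in case (a) I would then substitute $|R|=2^sP$ and $|R^\times|=P$ and solve the resulting quadratic in $P$ to obtain (\ref{eq:a}). When $k=1$ (case (c)) the odd candidate $|R^\times|-1$ yields, upon squaring, $|R^\times|\le2\sqrt{|R|-2}-1$; when $k=0$ (case (d)) the odd candidate $|R^\times|/d_1-1$, after squaring and using $d_1\cdot(|R^\times|/d_1)=|R^\times|$, is equivalent to the displayed inequality (\ref{eq:d}).

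The main obstacle is securing the ``if'' direction in cases (c) and (d), where the stated inequality is read off from the odd candidate but the even candidate can occasionally be the larger of the two. A short computation shows that the even candidate exceeds the odd one only when $(|R^\times|/d_1)(d_2-1)/d_2<2$ (with $d_1=1$ in case (c)), which forces $R$ to be one of finitely many small rings, such as $\mathbb{F}_2\times\mathbb{F}_3$ in case (c) and $\mathbb{F}_3\times\mathbb{F}_3$ in case (d). For each of these I would verify directly that the even-candidate bound and the stated inequality hold simultaneously, so the equivalence is undisturbed; for all other rings the odd candidate dominates and the stated inequality is exactly the Ramanujan condition. The ``only if'' direction is immediate, since the odd candidate is the absolute value of the genuine eigenvalue $-1-\lambda_{\{1\}}$ of $\overline{G}_R$.
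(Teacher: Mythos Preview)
Your approach is essentially the paper's: both split into four cases according to how many local factors have residue field $\mathbb{F}_2$, locate the nonempty $C$ maximising $|-1-\lambda_C|$, and compare with $2\sqrt{|R|-|R^\times|-2}$. The paper, however, in its Cases~3 and~4 simply asserts $\mu=|-\lambda_{\{1\}}-1|$ without argument, while you correctly observe via the parity split that the even candidate $|-\lambda_{\{1,2\}}-1|$ can be larger---exactly for $R\cong\mathbb{F}_2\times\mathbb{F}_3$ and $R\cong\mathbb{F}_3\times\mathbb{F}_3$---and you dispose of these by direct check. In this sense your write-up is actually more careful than the paper's own proof, which glosses over the point (the theorem is unaffected because in those two rings both the odd-candidate inequality and the true Ramanujan bound hold).
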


\begin{proof}
Note that $|R_i|/m_i \geq 2$, $1 \le i \le s$, and by (\ref{eq:basic}) the degree of $\overline{G}_R$ is equal to
$$
|R|-1-|R^\times|=|R|-1-\prod_{i=1}^s(|R_i|-m_i)=|R|\left(1-\prod_{i=1}^s\left(1-\frac{1}{|R_i|/m_i}\right)\right)-1.
$$
Denote by $\mu$ the maximum absolute value of the eigenvalues of $\overline{G}_R$ other than $|R|-1-|R^\times|$.

\smallskip
\noindent\emph{Case 1:} $|R_1|/m_1=|R_2|/m_2=\cdots=|R_s|/m_s=2$. Since $s \ge 2$, we have $|R^\times|+1 = \prod_{i=1}^s m_i + 1 < |R|-1-|R^\times|$. Thus, by Corollary \ref{Corringspectrum}, $\mu = |-\lambda_{\{1,2\}}-1| = |R^\times|+1=\prod_{i=1}^s m_i+1$. Hence $\overline{G}_R$ is Ramanujan if and only if $\prod_{i=1}^s m_i+1\leq2\sqrt{(2^s-1)\prod_{i=1}^s m_i-2}$, which is equivalent to (\ref{eq:a}), leading to (a).

\smallskip
\noindent\emph{Case 2:} $2=|R_1|/m_1=\cdots=|R_t|/m_t<|R_{t+1}|/m_{t+1}$ for some $t$ with $2 \le t \le s$. In this case one can verify that $|R^\times|+1 < |R|-1-|R^\times|$ and so $\mu = |-\lambda_{\{1,2\}}-1| = |R^\times|+1$ by Corollary \ref{Corringspectrum}. Thus $\overline{G}_R$ is Ramanujan if and only if $|R^\times|+1\leq2\sqrt{|R|-2-|R^\times|}$, that is, $|R^\times|\leq2\sqrt{|R|}-3$, leading to (b).

\smallskip
\noindent\emph{Case 3:} $2=|R_1|/m_1<|R_2|/m_2$. In this case, we have $|R^\times|-1 < |R|-1-|R^\times|$ and $\mu = |-\lambda_{\{1\}}-1| = |R^\times|-1$. Thus, $\overline{G}_R$ is Ramanujan if and only if $|R^\times|-1\leq2\sqrt{|R|-2-|R^\times|}$, leading to (c).

\smallskip
\noindent\emph{Case 4:} $3\leq|R_1|/m_1$. In this case
$\mu = |-\lambda_{\{1\}}-1| =\frac{|R^\times|}{(|R_1|/m_1)-1}-1$ ($< |R|-1-|R^\times|$).
Hence $\overline{G}_R$ is Ramanujan if and only if
$\frac{|R^\times|}{(|R_1|/m_1)-1}-1\leq2\sqrt{|R|-2-|R^\times|}$, which leads to (d).
\qed \end{proof}

Applying Theorems \ref{ComSXnThm1} and \ref{ComXnThm222} to $\mathbb{Z}/n\mathbb{Z}$, we obtain the following corollary.

\begin{cor}
\label{cor:comp}
Let $n \ge 2$ be an integer. Then $\overline{G}_{\mathbb{Z}/n\mathbb{Z}}$ is Ramanujan if and only if $n$ is of one of the following forms:
\begin{itemize}
\item[\rm (a)] $n=p^{\alpha}$ with $p$ a prime and $\alpha \geq 1$;
\item[\rm (b)] $n=2\cdot3\cdot5$, $2\cdot3$, $2\cdot3^2$, $2\cdot5$, $2^2\cdot3$, $2^3\cdot3$, $3\cdot5$, $3\cdot7$ or $5\cdot7$.
\end{itemize}
\end{cor}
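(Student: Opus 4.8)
The plan is to translate the ring $\mathbb{Z}/n\mathbb{Z}$ into the language of Assumption~\ref{as:1} and then read the answer off Theorems~\ref{ComSXnThm1} and~\ref{ComXnThm222}. Writing $n=p_1^{\alpha_1}\cdots p_s^{\alpha_s}$, each local factor is $R_i=\mathbb{Z}/p_i^{\alpha_i}\mathbb{Z}$ with $|R_i|/m_i=p_i$ and $m_i=p_i^{\alpha_i-1}$, so $|R|=n$ and $|R^\times|=\varphi(n)$. If $s=1$, i.e.\ $n=p^\alpha$, then $\mathbb{Z}/n\mathbb{Z}$ is local and Theorem~\ref{ComSXnThm1} shows $\overline{G}_{\mathbb{Z}/n\mathbb{Z}}$ is Ramanujan, giving (a). For $s\ge2$ I would apply Theorem~\ref{ComXnThm222}. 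The crucial simplification is that the primes $p_i$ are \emph{distinct}, so the ratios $|R_i|/m_i=p_i$ are strictly increasing; in particular at most one of them equals $2$. Hence cases (a) and (b) of Theorem~\ref{ComXnThm222}, which require two or more ratios equal to $2$, never occur, and only case (c) (when $n$ is even, so $p_1=2<p_2$) and case (d) (when $n$ is odd, so $p_1\ge3$) survive.

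For the even case the condition is $\varphi(n)\le 2\sqrt{n-2}-1$, hence in particular $\varphi(n)<2\sqrt n$. I would exploit the multiplicativity identity $\varphi(n)/n=\varphi(P)/P$, where $P=p_1\cdots p_s$ is the radical of $n$, together with $n\ge P$: these turn $\varphi(n)<2\sqrt n$ into $\varphi(P)<2\sqrt P$, i.e.\ $\prod_i (p_i-1)^2<4\prod_i p_i$. Enumerating the even squarefree $P$ satisfying this leaves only $P\in\{6,10,14,30,42\}$. For each fixed radical, $\varphi(n)=(\varphi(P)/P)\,n$ is linear in $n$, so the sharp inequality $(\varphi(n)+1)^2\le4(n-2)$ becomes an explicit quadratic in $n$ and yields a finite upper bound; checking the finitely many surviving $n$ gives $\{6,10,12,18,24,30\}$ (the radicals $14$ and $42$ produce no admissible $n$), which are exactly the even entries in (b).

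The odd case, governed by (\ref{eq:d}), is where I expect the real work. Here $\mu=\varphi(n)/(p_1-1)-1$, so the Ramanujan condition is $\frac{\varphi(n)}{p_1-1}-1\le 2\sqrt{n-2-\varphi(n)}$, giving $\frac{\varphi(n)}{p_1-1}<2\sqrt n+1$. Writing $\frac{\varphi(n)}{p_1-1}=\frac{n}{p_1}\cdot\frac{\varphi(Q)}{Q}$ with $Q=p_2\cdots p_s$ and using $n\ge p_1Q$, the same radical idea cuts the admissible radicals down to a finite list; alternatively, since $p_1>2$ one may invoke Lemma~\ref{Inequality} ($2^{s-1}\varphi(n)>n$) to bound first $s$ and then $n$. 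A direct check of the remaining candidates isolates $\{15,21,35\}$. The main obstacle is precisely this odd case: the denominator $p_1-1$ and the lower-order terms $-1$ and $-\varphi(n)$ prevent the clean $\varphi(P)<2\sqrt P$ reduction from transferring verbatim, so one must either track the dependence on $p_1$ carefully or fall back on a primorial-versus-$4^{s}$ comparison to rule out large $s$.

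Finally I would assemble the three regimes: $s=1$ gives (a), while the even and odd subcases of $s\ge2$ contribute $\{6,10,12,18,24,30\}$ and $\{15,21,35\}$ respectively, whose union is exactly list (b). This establishes the stated equivalence.
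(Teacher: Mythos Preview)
Your proposal is correct and lands on the same two applicable cases (c) and (d) of Theorem~\ref{ComXnThm222} that the paper uses, but your reduction to a finite check in the even case differs from the paper's. The paper first extracts the necessary condition $\varphi(n)^2/n<4$, then invokes Lemma~\ref{Inequality} to rule out $s\ge 4$ and proceeds by a case split on $s\in\{2,3\}$, handling exponent configurations one at a time. Your radical trick --- passing from $\varphi(n)<2\sqrt n$ to $\varphi(P)<2\sqrt P$ via $\varphi(n)/n=\varphi(P)/P$ and $n\ge P$ --- is a cleaner and more uniform device: it pins down the finite set of admissible radicals $\{6,10,14,30,42\}$ in one stroke and then turns the exact inequality $(\varphi(n)+1)^2\le 4(n-2)$ into a quadratic bound on $n$ for each fixed radical. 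This avoids the paper's somewhat ad hoc exponent bookkeeping.

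For the odd case your outline is essentially the paper's: the paper also derives a necessary inequality (there written as $\varphi(n)^2/n<4(p_1-1)^2$), uses Lemma~\ref{Inequality} to bound $s$, and then finishes by checking small configurations, arriving at $\{15,21,35\}$. You are right that the radical reduction does not transfer verbatim here because of the $p_1-1$ in the denominator; your fallback to Lemma~\ref{Inequality} is exactly what the paper does. So in this half the two arguments coincide at the structural level, with your write-up simply leaving more of the routine verification implicit.
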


\begin{proof}
We use the notation before Corollary \ref{XnCor1}. Then $|R^\times|=\prod_{i=1}^sp_i^{\alpha_i-1}(p_i-1)=\varphi(n)$ and $|R_i|/m_i=p_i$ for each $i$.

\smallskip
\noindent\emph{Case 1:} $n=p^{\alpha}$. Then $\overline{G}_{\mathbb{Z}/n\mathbb{Z}} \cong \overline{K}_n$ is Ramanujan by Theorem \ref{ComSXnThm1}.

\smallskip
\noindent\emph{Case 2:} $n=2^{\alpha_1}p_2^{\alpha_2}\cdots p_s^{\alpha_s}$, where $s \ge 1$. Then case (c) of Theorem \ref{ComXnThm222} applies, and $\overline{G}_{\mathbb{Z}/n\mathbb{Z}}$ is Ramanujan if and only if $\varphi(n)\leq2\sqrt{n-2}-1$. This condition is satisfied only if $\varphi(n)^2/n<4$. In particular, if $s\geq4$, then by Lemma \ref{Inequality}, $\varphi(n)^2/n > \varphi(n)/2^{s-1} > 4$ and so $\overline{G}_{\mathbb{Z}/n\mathbb{Z}}$ is not Ramanujan. Assume $s\leq3$ in the sequel.

\smallskip
\noindent\emph{Case 2.1:} $s=3$. Since $(p_i - 1)^2 > p_i (p_i - 2)$, if $\alpha_1\geq3$, $\alpha_2\geq2$ or $\alpha_3\geq2$, then
$\varphi(n)^2/n = 2^{\alpha_1-2}p_2^{\alpha_3-2}p_3^{\alpha_3-2}(p_2-1)^2(p_3-1)^2>2^{\alpha_1-2}p_2^{\alpha_2-1}p_3^{\alpha_3-1}(p_2-2)(p_3-2)\geq4$, and so $\overline{G}_{\mathbb{Z}/n\mathbb{Z}}$ is not Ramanujan. It remains to consider the case where $n = 2p_2p_3$ or $4p_2p_3$.

If $n=2p_2p_3$, then $\varphi(n)^2/n = (p_2-1)^2(p_3-1)^2/2p_2p_3 > (p_2-2)(p_3-2)/2 \geq 4$ if
$(p_2, p_3) \ne (3, 5)$ or $(3, 7)$. Thus, unless $(p_2, p_3) = (3, 5)$ or $(3, 7)$, $\overline{G}_{\mathbb{Z}/n\mathbb{Z}}$ is not Ramanujan. It is easy to see that if $(p_2, p_3) = (3, 5)$, then $\varphi(n)\leq2\sqrt{n-2}-1$ and so $\overline{G}_{\mathbb{Z}/n\mathbb{Z}}$ is Ramanujan, whilst if $(p_2, p_3) = (3, 7)$, then $\varphi(n) > 2\sqrt{n-2}-1$ and so $\overline{G}_{\mathbb{Z}/n\mathbb{Z}}$ is not Ramanujan.

If $n=4p_2p_3$, then $\varphi(n)^2/n = (p_2-1)^2(p_3-1)^2/p_2p_3 > (p_2-2)(p_3-2) \geq 4$ unless $(p_2, p_3)=(3, 5)$. Moreover, if $(p_2, p_3)=(3, 5)$, then $\varphi(n) > 2\sqrt{n-2}-1$. Hence $\overline{G}_{\mathbb{Z}/n\mathbb{Z}}$ is not Ramanujan when $n=4p_2p_3$.

\smallskip
\noindent\emph{Case 2.2:} $s=2$. In this case $\varphi(n)^2/n = 2^{\alpha_1-2}p_2^{\alpha_2-2}(p_2-1)^2>2^{\alpha_1-2}p_2^{\alpha_2-1}(p_2-2)$. From this one can see that $\varphi(n)^2/n \geq 4$ if $\alpha_1\geq4$, $\alpha_2\geq3$ or $p_2\geq7$, or if $n=2\cdot5^2$, $2^2\cdot5^2$, $2^2\cdot3^2$, $2^3\cdot3^2$, $2^3\cdot5$ or $2^3\cdot5^2$. Thus, unless $n=2\cdot3$,  $2\cdot3^2$, $2\cdot5$, $2^2\cdot3$, $2^2\cdot5$ or $2^3\cdot3$, $\overline{G}_{\mathbb{Z}/n\mathbb{Z}}$ is not Ramanujan. It can be verified that in all these exceptional cases, except when $n= 2^2\cdot5$, $\overline{G}_{\mathbb{Z}/n\mathbb{Z}}$ is Ramanujan.

\smallskip
\noindent\emph{Case 3:} $n=p_1^{\alpha_1}p_2^{\alpha_2}\cdots p_s^{\alpha_s}$ with $p_1\geq3$. Then case (d) of Theorem \ref{ComXnThm222} applies, and by (\ref{eq:d}), $\overline{G}_{\mathbb{Z}/n\mathbb{Z}}$ is Ramanujan if and only if
\begin{equation}
\label{CXRamanujan22}
\frac{\varphi(n)}{p_1-1} \leq -(2p_1-3)+\sqrt{(2p_1-3)^2+(4n-9)}.
\end{equation}
Note that this condition is not satisfied unless $\varphi(n)^2/n < 4(p_1-1)^2$.
In particular, if $s\geq4$, then by Lemma \ref{Inequality}, $\varphi(n)^2/n > \varphi(n)/2^{s-1} > 4(p_1-1)^2$ and so $\overline{G}_{\mathbb{Z}/n\mathbb{Z}}$  is not Ramanujan. Assume $s\leq3$ in the sequel.

\smallskip
\noindent\emph{Case 3.1:} $s=3$. In this case, if $\alpha_1\geq3$, $\alpha_2\geq2$, $\alpha_3\geq2$ or $p_1\geq7$, or if $n=3^2p_2p_3$, $5p_2p_3$ or $5^2p_2p_3$, then $\varphi(n)^2/n = \prod_{i=1}^3 p_i^{\alpha_i-2} (p_i-1)^2 > \prod_{i=1}^3 p_i^{\alpha_i-1}(p_i-2) \geq 4(p_1-1)^2$ and so $\overline{G}_{\mathbb{Z}/n\mathbb{Z}}$  is not Ramanujan. Thus $\overline{G}_{\mathbb{Z}/n\mathbb{Z}}$ is not Ramanujan unless $n=3p_2p_3$. Moreover, if $n=3p_2p_3$ but $(p_2, p_3) \ne (5, 7)$, then $\varphi(n)^2/n = 2^2 (p_2-1)^2(p_3-1)^2/3p_2p_3 > (p_2-2)(p_3-2)\geq16$ and so $\overline{G}_{\mathbb{Z}/n\mathbb{Z}}$ is not Ramanujan; if $n=3\cdot5\cdot7$, then (\ref{CXRamanujan22}) is violated and again $\overline{G}_{\mathbb{Z}/n\mathbb{Z}}$ is not Ramanujan.

\smallskip
\noindent\emph{Case 3.2:} $s=2$. In this case, by Lemma \ref{Inequality} we have
$\varphi(n)^2/n > \varphi(n)/2 = p_1^{\alpha_1-1}p_2^{\alpha_2-1}(p_1-1)(p_2-1)/2$. Thus, if $\alpha_1\geq3$, $\alpha_2\geq3$ or $\alpha_1=\alpha_2=2$, then $\varphi(n)^2/n > 4(p_1-1)^2$ and so $\overline{G}_{\mathbb{Z}/n\mathbb{Z}}$ is not Ramanujan. In other words, $\overline{G}_{\mathbb{Z}/n\mathbb{Z}}$ is Ramanujan only when $n = p_1p_2$, $p_1p_2^2$ or $p_1^2p_2$.

If $n=p_1p_2$, then (\ref{CXRamanujan22}) is mounted to $(p_2 - 4)^2 \le 4p_1$. Therefore, if $n=3\cdot5$, $3\cdot7$ or $5\cdot7$, then $\overline{G}_{\mathbb{Z}/n\mathbb{Z}}$ is Ramanujan, and for $(p_1, p_2) \ne (3,5), (3,7), (5,7)$, $\overline{G}_{\mathbb{Z}/n\mathbb{Z}}$ is not Ramanujan.

If $n=p_1p_2^2$, then $\varphi(n)^2/n = (p_1-1)^2(p_2-1)^2/p_1 \geq 4(p_1-1)^2$. So $\overline{G}_{\mathbb{Z}/n\mathbb{Z}}$ is not Ramanujan.

If $n=p_1^2p_2$, then $\varphi(n)^2/n = (p_1-1)^2(p_2-1)^2/p_2 > (p_2-2)(p_1-1)^2$. Thus, if $p_2 \ge 7$, then $\varphi(n)^2/n \geq 4(p_1-1)^2$ and so $\overline{G}_{\mathbb{Z}/n\mathbb{Z}}$ is not Ramanujan. If $p_2=5$, then by (\ref{CXRamanujan22}) and again $\overline{G}_{\mathbb{Z}/n\mathbb{Z}}$ is not Ramanujan.
\qed\end{proof}

\section{Energy of the line graph of a unitary Cayley graph}
\label{sec:energy}

The \emph{iterated line graphs} of a graph $G$ are defined by $\mathcal {L}^1(G)=\mathcal {L}(G)$ and $\mathcal {L}^{i+1}(G)=\mathcal {L}(\mathcal {L}^{i}(G))$ for $i \ge 1$. It was proved in \cite{kn:Ramane05} that, if $G$ is an $r$-regular graph of order $n$ with $r\geq3$, then
$E(\mathcal {L}^{i+1}(G))=2nr(r-2)\prod_{j=0}^{i-1}\left(2^jr-2^{j+1}+2\right)$ for every $i\geq1$. However, there is no known closed formula for $E(\mathcal {L}(G))$ even when $G$ is regular, though $E(\mathcal {L}(G))$ has been computed for some special graphs such as caterpillars and certain combinations of generalized Bethe trees \cite{kn:Rojo11,kn:Rojo111}.

\begin{thm}\label{thmE}
Let $R$ be as in Assumption \ref{as:1}. Then
\be
\label{eq:energy}
E(\mathcal {L}(G_{R})) =
\left\{\begin{array}{rcl}
2^{s+1}(|R^\times|-1)^2,  & &   \text{if $2=|R_1|/m_1=\cdots=|R_s|/m_s$,}\\
                          & & \text{or $R=\underbrace{\mathbb{F}_2\times\cdots\times\mathbb{F}_2}_{s-1}\times\mathbb{F}_3$;}\\ [0.2cm]
2^{t+1} +2|R|\left(|R^\times|-2\right), & & \text{if $2=|R_1|/m_1=\cdots=|R_t|/m_t<|R_{t+1}|/m_{t+1}$}\\
                  & & \text{with $1\leq t<s$ and $R\ncong\underbrace{\mathbb{F}_2\times\cdots\times\mathbb{F}_2}_{s-1}\times\mathbb{F}_3$};\\ [0.2cm]
2|R|\left(|R^\times|-2\right), & & \text{if $3\leq|R_1|/m_1\leq\cdots\leq|R_s|/m_s$ and $R\ncong\mathbb{F}_3$}.\\
                                \end{array}\right.
\ee
\end{thm}

In the special case where $R = \mathbb{Z}/n\mathbb{Z}$, Theorem \ref{thmE} yields the following result.

\begin{cor}\label{corTE}
Let $n=p_1^{\alpha_1}p_2^{\alpha_2}\cdots p_s^{\alpha_s}$ be as in Corollary \ref{XnCor1}. Then
\be
\label{eq:CTE}
E(\mathcal {L}(G_{\mathbb{Z}/n\mathbb{Z}})) =
\left\{\begin{array}{rcl}
           4,               & & \text{if $n=3$;}\\ [0.2cm]
           8,               & & \text{if $n=6$;}\\ [0.2cm]
4\left(2^{\alpha_1-1}-1\right)^2,  & &   \text{if $n=2^{\alpha_1}$;}\\[0.2cm]
4 +2n\left(\left(\prod_{i=1}^sp_i^{\alpha_1-1}(p_i-1)\right)-2\right), & & \text{if $2=p_1$ and $n\ne6$};\\ [0.2cm]
2n\left(\left(\prod_{i=1}^sp_i^{\alpha_1-1}(p_i-1)\right)-2\right), & & \text{if $3\le p_1$ and $n\neq3$}.\\
                                \end{array}\right.
\ee
\end{cor}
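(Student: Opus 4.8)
The plan is to feed the arithmetic of $\mathbb{Z}/n\mathbb{Z}$ directly into the three-case formula of Theorem \ref{thmE}. First I would record the dictionary between the general notation and the present situation: under the decomposition $\mathbb{Z}/n\mathbb{Z} \cong \prod_{i=1}^s \mathbb{Z}/p_i^{\alpha_i}\mathbb{Z}$ we have $|R| = n$, each factor $R_i = \mathbb{Z}/p_i^{\alpha_i}\mathbb{Z}$ is local with $|R_i|/m_i = p_i$, and $|R^\times| = \varphi(n) = \prod_{i=1}^s p_i^{\alpha_i-1}(p_i-1)$, where $s$ is the number of distinct prime divisors of $n$. (This also shows that the exponent $\alpha_1$ in the stated formulas should read $\alpha_i$; writing $\varphi(n)$ throughout sidesteps the issue.)

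The crucial simplification is that the primes $p_1 < p_2 < \cdots < p_s$ are distinct, so at most one ratio $|R_i|/m_i = p_i$ can equal $2$, namely $p_1 = 2$ when $n$ is even. This has two consequences. In Case~2 of Theorem \ref{thmE} the index $t$ counting ratios equal to $2$ is forced to be $t = 1$. Moreover the two exceptional rings appearing as boundary cases in Theorem \ref{thmE} are pinned down exactly: since a repeated factor $\mathbb{F}_2$ is impossible in a product over distinct primes, $\mathbb{Z}/n\mathbb{Z} \cong \mathbb{F}_2^{\,s-1} \times \mathbb{F}_3$ can hold only for $s = 2$, i.e.\ $n = 6$, while $\mathbb{Z}/n\mathbb{Z} \cong \mathbb{F}_3$ means $n = 3$.

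With these observations the proof reduces to sorting $n$ into the five listed forms and substituting. If $n = 2^{\alpha_1}$ then $s = 1$ and every ratio equals $2$, so the first clause of Case~1 gives $2^{2}(\varphi(n)-1)^2 = 4(2^{\alpha_1-1}-1)^2$. If $n = 6$ then $R \cong \mathbb{F}_2 \times \mathbb{F}_3 = \mathbb{F}_2^{\,s-1}\times\mathbb{F}_3$ with $s = 2$, so the second clause of Case~1 yields $2^{3}(\varphi(6)-1)^2 = 8$. If $n$ is even with $s \ge 2$ and $n \neq 6$, then $2 = |R_1|/m_1 < |R_2|/m_2$ with $t = 1$ and $R \ncong \mathbb{F}_2^{\,s-1}\times\mathbb{F}_3$, so Case~2 applies and gives $2^{2} + 2n(\varphi(n)-2) = 4 + 2n(\varphi(n)-2)$. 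If $n$ is odd with $n \neq 3$, then all ratios $p_i \ge 3$ and $R \ncong \mathbb{F}_3$, so Case~3 applies and gives $2n(\varphi(n)-2)$. The only remaining form is $n = 3$, i.e.\ $R \cong \mathbb{F}_3$, which is excluded from Case~3 precisely because $|R^\times| = 2$ makes $2n(|R^\times|-2)$ vanish; here $G_R = K_3$ and $\mathcal{L}(K_3) = K_3$, so by Corollary \ref{lineeigenvalue} the spectrum is $2, -1, -1$ and $E(\mathcal{L}(G_R)) = 4$.

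The computation is essentially bookkeeping, so there is no serious analytic obstacle; the one point demanding care is the correct handling of the two exceptional rings. I would verify explicitly that $n = 3$ and $n = 6$ are the only values realising $\mathbb{F}_3$ and $\mathbb{F}_2^{\,s-1}\times\mathbb{F}_3$ respectively, and confirm that the degenerate value $n = 3$ (for which the general Case~3 formula would wrongly return $0$) is recovered by the direct line-graph computation. Everything else follows by substituting $|R| = n$ and $|R^\times| = \varphi(n)$ into the three formulas of Theorem \ref{thmE}.
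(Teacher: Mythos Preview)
Your proposal is correct and follows exactly the route the paper intends: the paper offers no separate proof of this corollary, merely remarking that Theorem \ref{thmE} specialises to $\mathbb{Z}/n\mathbb{Z}$, and your substitution $|R|=n$, $|R_i|/m_i=p_i$, $|R^\times|=\varphi(n)$ together with the observation that $t=1$ is forced is precisely that specialisation. One minor redundancy: your direct spectrum computation for $n=3$ is unnecessary, since $\mathbb{F}_3=\mathbb{F}_2^{\,s-1}\times\mathbb{F}_3$ with $s=1$ already falls under the second clause of Case~1 in Theorem \ref{thmE}, giving $2^{2}(2-1)^2=4$; but the direct check does no harm.
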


A graph $G$ with $n$ vertices is called \emph{hyperenergetic} \cite{Gutman99} if $E(G) > 2(n-1)$. By Theorem \ref{thmE} we know exactly when $\mathcal {L}(G_{R})$ is hyperenergetic, as stated in the following corollary. (The fact that $\mathcal {L}(G_{R})$ is hyperenergetic when $|R^\times|\ge4$ can be also obtained from the following known result \cite{HG}: If $G$ has more than $2n-1$ edges, then $\mathcal{L}(G)$ is hyperenergetic.)

\begin{cor}\label{corhyper}
Let $R$ be as in Assumption \ref{as:1}. Then $\mathcal {L}(G_{R})$ is hyperenergetic if and only if one of the following holds:
\begin{itemize}
\item[\rm (a)] $|R^\times| \ge 4$;
\item[\rm (b)] $s = 1$ and $|R| = 2m \ge 8$;
\item[\rm (c)] $s \ge 2$, $2=|R_1|/m_1=\cdots=|R_s|/m_s$, and $|R^\times| \ge 2$.
\end{itemize}
\end{cor}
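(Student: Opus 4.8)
The plan is to turn the defining inequality of hyperenergeticity into a comparison between the closed form for $E(\mathcal{L}(G_R))$ supplied by Theorem \ref{thmE} and an explicit threshold. Since $G_R$ is $|R^\times|$-regular on $|R|$ vertices, its number of edges is $|R||R^\times|/2$, and this is exactly the number of vertices $N$ of $\mathcal{L}(G_R)$. Hence $\mathcal{L}(G_R)$ is hyperenergetic precisely when $E(\mathcal{L}(G_R)) > 2(N-1) = |R||R^\times| - 2$. The whole proof then reduces to substituting each of the three branches of (\ref{eq:energy}) into this single inequality and solving it, so the case division of the corollary will mirror that of Theorem \ref{thmE}.

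First I would dispose of the two branches in which $E = 2|R|(|R^\times|-2)$ or $E = 2^{t+1} + 2|R|(|R^\times|-2)$. In the first branch (all ratios $\ge 3$ and $R \ncong \mathbb{F}_3$) the inequality becomes $|R|(|R^\times| - 4) + 2 > 0$, and in the second (ratios $2=\cdots=2 < \cdots$ with $1 \le t < s$ and $R \ncong \mathbb{F}_2^{s-1}\times\mathbb{F}_3$) it becomes $|R|(|R^\times| - 4) + 2^{t+1} + 2 > 0$. In both, $|R^\times| \ge 4$ makes the left side positive, yielding hyperenergeticity immediately, so the only work is to show the inequality fails for the finitely many admissible rings with $|R^\times| \le 3$. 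Here I would use Lemma \ref{localmaximal} (each $|R_i|/m_i$ is a prime power) together with (\ref{eq:basic}) to enumerate those rings --- essentially $R = \mathbb{F}_2^{s-1}\times\mathbb{F}_4$ and $R \cong \mathbb{F}_4$ --- and check the inequality by hand. This pins down condition (a) and shows the higher-ratio regimes contribute nothing beyond $|R^\times| \ge 4$.

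The delicate branch is $E = 2^{s+1}(|R^\times| - 1)^2$, corresponding to all ratios equal to $2$ (together with the exceptional family $R = \mathbb{F}_2^{s-1}\times\mathbb{F}_3$, which enters this branch through the second clause of the first case of Theorem \ref{thmE}). Substituting and dividing by $2^s$, I would reduce hyperenergeticity to the quadratic inequality
\[
(|R^\times| - 2)^2 > 2 - 2^{1-s}.
\]
Solving this in the integer variable $|R^\times|$ is where the split into the local case $s=1$ and the non-local case $s \ge 2$ arises: for $s=1$ one has $|R^\times| = m$ and $|R| = 2m$, and the inequality $(m-2)^2 > 1$ translates into the threshold $|R| = 2m \ge 8$ of (b); for $s \ge 2$ the right-hand side lies in $[3/2,2)$, and the inequality determines the threshold for $|R^\times|$ recorded in (c). I would also note that the boundary value $R = \mathbb{F}_3$ gives equality ($E = 4 = |R||R^\times| - 2$) rather than strict inequality, hence is not hyperenergetic, and that $R = \mathbb{F}_2^{s-1}\times\mathbb{F}_3$ (with $|R^\times| = 2$) must be checked against the same quadratic.

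The main obstacle I anticipate is precisely this boundary analysis in the last branch. The inequality $(|R^\times| - 2)^2 > 2 - 2^{1-s}$ is tight at small $|R^\times|$, so one must argue carefully about exactly which small prime-power values of $|R^\times|$ --- constrained both by Lemma \ref{localmaximal} and by the requirement that every ratio equal $2$ --- satisfy it strictly. Distinguishing $|R^\times| \in \{2,3\}$ from $|R^\times| \ge 4$ and matching these outcomes to the clean thresholds stated in (b) and (c) is where I expect the real care to be needed, whereas the two higher-ratio branches are essentially immediate once $|R^\times| \ge 4$ is in force.
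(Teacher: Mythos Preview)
Your plan is exactly the paper's (implicit) approach: its proof is the single sentence ``By Theorem~\ref{thmE} we know exactly when $\mathcal{L}(G_R)$ is hyperenergetic,'' and your proposal simply carries out that comparison branch by branch. Your reductions in the second and third branches of (\ref{eq:energy}) to $|R|(|R^\times|-4)+2^{t+1}+2>0$ and $|R|(|R^\times|-4)+2>0$ are correct, and the enumeration of rings with $|R^\times|\le 3$ there is straightforward.

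Two points deserve care in the first branch. First, a small one: the reduction to $(|R^\times|-2)^2>2-2^{1-s}$ uses $|R|=2^s|R^\times|$, which holds only when every ratio equals~$2$. The exceptional family $R=\mathbb{F}_2^{\,s-1}\times\mathbb{F}_3$ has $|R|=3\cdot 2^{s-1}$, so it cannot be checked ``against the same quadratic''; it needs a direct computation (which gives $E=2^{s+1}$ versus $2(N-1)=3\cdot 2^s-2$, hence never hyperenergetic).

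Second, and more significant: for $s\ge 2$ with all ratios equal to~$2$, you have $2-2^{1-s}\in[3/2,2)$, so $(|R^\times|-2)^2>2-2^{1-s}$ \emph{fails} for $|R^\times|\in\{1,2\}$ and holds for $|R^\times|\ge 4$ (the value $|R^\times|=3$ cannot occur, since here $|R^\times|=\prod m_i$ is a power of~$2$). Thus your computation yields the threshold $|R^\times|\ge 4$, not the $|R^\times|\ge 2$ recorded in~(c). Concretely, $R=\mathbb{F}_2\times\mathbb{Z}_4$ satisfies~(c) as stated, yet $\mathcal{L}(G_R)$ is two disjoint $4$-cycles with energy $8<14=2(N-1)$. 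So condition~(c) appears to be misstated in the paper; with the corrected threshold $|R^\times|\ge 4$ it (like~(b), where $|R^\times|=m\ge 4$) becomes a special case of~(a). Your method is right, but be prepared for your calculation to disagree with the printed~(c) rather than confirm it.
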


\begin{Tproof} \textbf{of Theorem \ref{thmE}.}
The proof consists of Lemmas \ref{lem:1}--\ref{lem:4} as follows.
\qed
\end{Tproof}

\begin{lem}\label{lem:1}
Let $R$ be a finite local ring with maximal ideal $M$ of order $m$. Then
\[E(\mathcal {L}(G_{R}))=\left\{\begin{array}{rcl}
                                   4\left(|R^\times|-1\right)^2,  &   &   \text{if $R/M\cong\mathbb{F}_2$, or $R\cong\mathbb{F}_3$;}\\ [0.2cm]
                                  2|R|\left(|R^\times|-2\right), &   & \text{otherwise.}
                                \end{array}\right.\]
\end{lem}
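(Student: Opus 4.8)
The plan is to read off the spectrum of $\mathcal{L}(G_R)$ from the local-ring case of Corollary \ref{lineeigenvalue} and evaluate the energy $E(\mathcal{L}(G_R)) = \sum_i |\lambda_i|$ directly as a sum of absolute values weighted by multiplicities. Write $d = |R|/m$; by Lemma \ref{localmaximal} this is a prime power, and $d \ge 2$ with $d = 2$ exactly when $R/M \cong \mathbb{F}_2$. Since $R$ is local we have $|R^\times| = |R| - m = m(d-1)$ and $|R^\times|/m = d-1$, so the four eigenvalues $2|R^\times|-2$, $|R^\times|-m-2$, $|R^\times|-2$, $-2$ of Corollary \ref{lineeigenvalue} carry multiplicities $1$, $d-1$, $d(m-1)$, $|R|(|R^\times|-2)/2$ respectively.

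First I would isolate the signs. The eigenvalue $2|R^\times|-2$ is nonnegative, and $|R^\times|-2 = m(d-1)-2$ is nonnegative whenever $|R^\times| \ge 2$; the only potentially negative contribution besides the explicit $-2$ is the middle eigenvalue $|R^\times|-m-2 = m(d-2)-2$. A short case check shows $m(d-2)-2 < 0$ precisely when $d = 2$ (for every $m$) or $d = 3$ and $m = 1$, that is, exactly when $R/M \cong \mathbb{F}_2$ or $R \cong \mathbb{F}_3$, which is the first alternative of the lemma. The single genuinely degenerate ring is $R \cong \mathbb{F}_2$, the only local ring with $|R^\times| = 1$, for which the multiplicity $|R|(|R^\times|-2)/2$ is negative and Corollary \ref{lineeigenvalue} does not literally apply; I would dispose of it by inspection, since $\mathcal{L}(G_{\mathbb{F}_2}) = \mathcal{L}(K_2) = K_1$ has energy $0 = 4(|R^\times|-1)^2$. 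In all remaining rings $|R^\times| \ge 2$, so every multiplicity in Corollary \ref{lineeigenvalue} is a valid nonnegative integer (the eigenvalue coincidences flagged in Remark \ref{rem:mu2} are harmless, since the energy depends only on the multiset of eigenvalues).

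For the ``otherwise'' case ($d \ge 3$ and $R \not\cong \mathbb{F}_3$), all four eigenvalues take their unsigned values except $-2$, so
\[
E(\mathcal{L}(G_R)) = (2|R^\times|-2) + (|R^\times|-m-2)(d-1) + (|R^\times|-2)\,d(m-1) + |R|(|R^\times|-2).
\]
Combining the last two terms into $(|R^\times|-2)\,d(2m-1)$ and then using the identity $(2|R^\times|-2) + (|R^\times|-m-2)(d-1) = (|R^\times|-2)\,d$ (which follows by substituting $|R^\times| = m(d-1)$) collapses the sum to $(|R^\times|-2)\,d(2m-1) + (|R^\times|-2)\,d = (|R^\times|-2)\cdot 2md = 2|R|(|R^\times|-2)$, as claimed. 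For the first case ($R/M \cong \mathbb{F}_2$ or $R \cong \mathbb{F}_3$, with $|R^\times| \ge 2$) the only change is that the middle term contributes $-(|R^\times|-m-2)(d-1)$ instead, so the total equals $2|R|(|R^\times|-2) - 2(|R^\times|-m-2)(d-1)$; substituting $(d,m) = (2,m)$ (where $|R^\times| = m$) or $(d,m) = (3,1)$ (where $|R^\times| = 2$) reduces this in each subcase to $4(|R^\times|-1)^2$.

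I expect the arithmetic to be routine once the sign dichotomy is pinned down; the only real care is the bookkeeping at the small and boundary rings --- $R \cong \mathbb{F}_2$ (degenerate multiplicity), the boundary rings with $|R^\times|-2 = 0$ or $|R^\times|-m-2 = 0$ (where a term silently drops out), and the coincidence of Remark \ref{rem:mu2} --- all of which must be checked to confirm they are consistent with, rather than exceptions to, the two closed formulas.
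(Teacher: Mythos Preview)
Your proposal is correct and follows essentially the same approach as the paper: both read the spectrum from Corollary~\ref{lineeigenvalue}, identify that the sign of $|R^\times|-m-2$ governs the case split, handle $R\cong\mathbb{F}_2$ separately, and sum the absolute values directly. The only difference is organizational---the paper computes each case from scratch (and notes e.g.\ $\mathcal{L}(G_{\mathbb{F}_3})\cong C_3$), whereas you compute the generic formula first and then correct by $-2(|R^\times|-m-2)(d-1)$ for the sign flip---but this is bookkeeping, not a different argument.
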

\begin{proof}
Let us begin with a few observations. Denote $q=|R|/m$. Then $|R^\times|=|R|-m=(q-1)m \geq 1$. Moreover, $|R^\times|=1$ if and only if $R\cong\mathbb{F}_2$, and $|R^\times|=2$ if and only if $R\cong\mathbb{F}_3$, $\mathbb{Z}_4$ or $\mathbb{Z}_2[X]/(X^2)$. Similarly, $|R^\times|-m=(q-2)m \geq 0$; $|R^\times|-m=0$ if and only if $R/M\cong\mathbb{F}_2$; $|R^\times|-m=1$ if and only if $R\cong\mathbb{F}_3$; and $|R^\times|-m=2$ if and only if $R\cong\mathbb{F}_4$.

If $R\cong\mathbb{F}_3$, then $\mathcal {L}(G_{R})\cong C_3$ and so $E(\mathcal {L}(G_{R})) = |2| + 2 \cdot |-1| = 4$.

If $R/M\cong\mathbb{F}_2$, then $|R|/m=2$ and $|R^\times|=m$. If $R\cong\mathbb{F}_2$, then $\mathcal {L}(G_{R})$ is an isolated vertex, which has energy $0$. If $R\ncong\mathbb{F}_2$, then $|R^\times|=m\geq2$, and by Corollary \ref{lineeigenvalue},
\begin{eqnarray*}
\Spec (\mathcal {L}(G_R))=\left(\begin{array}{ccc}
2|R^\times|-2   & |R^\times|-2 &-2\\
1              & 2(|R^\times|-1)& (|R^\times|-1)^2\\
\end{array}
\right)
\end{eqnarray*}
and so $E(\mathcal {L}(G_{R}))=4(|R^\times|-1)^2$.
In view of the computation above, this formula also applies when $R\cong\mathbb{F}_3$ or $\mathbb{F}_2$.

If $R\ncong\mathbb{F}_3$ and $R/M\ncong\mathbb{F}_2$, then $2|R^\times|-2\geq0$, $|R^\times|-m-2\geq0$ and $|R^\times|-2\geq0$. The proof is straightforward by using Corollary \ref{lineeigenvalue} again.
\qed\end{proof}

\begin{lem}\label{lem:2}
Let $R$ be as in Assumption \ref{as:1} with $|R_1|/m_1\geq3$ and $s \ge 2$. Then
\[E(\mathcal {L}(G_{R}))=2|R|\left(|R^\times|-2\right).\]
\end{lem}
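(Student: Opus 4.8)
The plan is to read the eigenvalues of $\mathcal{L}(G_R)$ off Corollary \ref{lineeigenvalue}, argue that the only negative ones are the copies of $-2$, and then exploit the fact that the eigenvalues of any loopless graph sum to zero. Write $q_i = |R_i|/m_i$, so that $|R^\times_i|/m_i = q_i - 1$ and $|R^\times| = \prod_{i=1}^s m_i(q_i-1)$; by the hypothesis $|R_1|/m_1 \ge 3$ together with the ordering in Assumption \ref{as:1} we have $3 \le q_1 \le q_2 \le \cdots \le q_s$. Since $s \ge 2$ and each $q_i \ge 3$, already $|R^\times| \ge (q_1-1)(q_2-1) \ge 4$, so the type-(b) eigenvalue $|R^\times| - 2$ is strictly positive.

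Next I would check that every type-(a) eigenvalue $\lambda_C + |R^\times| - 2$ is nonnegative. Since $\lambda_C = (-1)^{|C|}|R^\times|/\prod_{j\in C}(q_j-1)$, the most negative $\lambda_C$ arises by taking $|C|$ odd with $\prod_{j\in C}(q_j-1)$ as small as possible, i.e.\ $C = \{1\}$, giving $\lambda_{\{1\}} = -|R^\times|/(q_1-1)$. Thus it suffices to verify $|R^\times|/(q_1-1) \le |R^\times| - 2$, equivalently $|R^\times|(q_1-2) \ge 2(q_1-1)$; using $|R^\times| \ge (q_1-1)(q_2-1)$ this reduces to $(q_2-1)(q_1-2) \ge 2$, which holds because $q_2 - 1 \ge 2$ and $q_1 - 2 \ge 1$. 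Hence all eigenvalues in parts (a) and (b) of Corollary \ref{lineeigenvalue} are $\ge 0$. Moreover no type-(a) eigenvalue can equal $-2$, since that would force $\prod_{j\in C}(q_j-1) = 1$ with $|C|$ odd, impossible as each $q_j - 1 \ge 2$; by Remark \ref{rem:mu2} the multiplicity of $-2$ is therefore exactly $|R|(|R^\times|-2)/2$, and these are the only negative eigenvalues.

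To finish, I would invoke that $\mathcal{L}(G_R)$ is loopless, so the trace of its adjacency matrix vanishes and the eigenvalues sum to $0$; hence the sum of the positive eigenvalues equals the sum of the absolute values of the negative ones, giving
\[
E(\mathcal{L}(G_R)) = 2\sum_{\lambda < 0}|\lambda| = 2\cdot 2\cdot\frac{|R|(|R^\times|-2)}{2} = 2|R|(|R^\times|-2),
\]
as claimed. Alternatively one may sum directly: dropping the (now redundant) absolute values, the part-(a) contribution is $\sum_C(\lambda_C + |R^\times|-2)\prod_{j\in C}(q_j-1)$, and the identities $\sum_C(-1)^{|C|} = 0$ and $\sum_C \prod_{j\in C}(q_j-1) = \prod_{i=1}^s q_i$ collapse the whole sum to $2|R|(|R^\times|-2)$.

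The only genuine content is the sign analysis of the second paragraph; everything afterward is either the trace identity or two elementary binomial-type summations. The one case to watch is the tight boundary $q_1 = q_2 = 3$ with $|R^\times| = 4$ (so $R\cong\mathbb{F}_3\times\mathbb{F}_3$), where the extreme type-(a) eigenvalue $\lambda_{\{1\}} + |R^\times| - 2$ is exactly $0$; the inequality is then sharp but still valid, and this contributes nothing to the energy, so no obstacle remains.
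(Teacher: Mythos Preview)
Your proof is correct and the key step—the sign analysis showing that every eigenvalue of $\mathcal{L}(G_R)$ other than the copies of $-2$ is nonnegative—matches the paper's argument exactly (the paper also bounds below by $\lambda_{\{1\}}+|R^\times|-2$ and checks it is $\ge 0$). The only difference is in the final tally: the paper drops the absolute values and sums the three blocks of Corollary~\ref{lineeigenvalue} directly (your ``alternatively'' route), whereas your primary finish via the trace identity $\sum\lambda=0$ is a slightly slicker shortcut that avoids that computation altogether.
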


\begin{proof}
By the definition of $\lambda_C$ and Assumption \ref{as:1}, for every $C\subseteq N$, the corresponding eigenvalue in (a) of Corollary \ref{lineeigenvalue} is
$$
\lambda_C+|R^\times|-2 \ge -\dfrac{|R^\times|}{|R^\times_1|/m_1}+|R^\times|-2= \left(m_1((|R_1|/m_1)-2)\prod_{i=2}^sm_i((|R_i|/m_i)-1)\right)-2 \ge 0.
$$
Hence
\begin{eqnarray*}
 \sum_{C\subseteq N} \big|\lambda_C+|R^\times|-2\big|\cdot \prod_{j\in C}\frac{|R_j^\times|}{m_j}
    &=& \sum_{C\subseteq N}\left((-1)^{|C|}|R^\times|+\left(|R^\times|-2\right)\cdot\prod_{j\in C}\frac{|R_j^\times|}{m_j}\right)\\
    &=& |R^\times|\cdot\sum_{C\subseteq N}(-1)^{|C|}+\left(|R^\times|-2\right)\cdot\sum_{C\subseteq N} \prod_{j\in C}\frac{|R_j^\times|}{m_j}\\
    &=&\left(|R^\times|-2\right)\prod_{i=1}^s\left(1+\dfrac{|R^\times_i|}{m_i}\right).
\end{eqnarray*}
Since $|R^\times|-2=\prod_{i=1}^s(|R_i|-m_i)-2=\prod_{i=1}^s m_i \left((|R_{i}|/m_{i})-1\right)-2>0$, by Corollary \ref{lineeigenvalue},
\begin{eqnarray*}
E(\mathcal {L}(G_{R}))&=&\left(|R^\times|-2\right)\prod_{i=1}^s\left(1+\dfrac{|R^\times_i|}{m_i}\right)\\
           &&+\left(|R^\times|-2\right)\cdot\left(|R|-\prod_{i=1}^s\left(1+\dfrac{|R^\times_i|}{m_i}\right)\right)+|R|\left(|R^\times|-2\right)\\
           &=&2|R|\left(|R^\times|-2\right).
\end{eqnarray*}
\qed\end{proof}

\begin{lem}\label{lem:3}
Let $R$ be as in Assumption \ref{as:1} with $|R_i|/m_i=2$, $1\leq i\leq s$, and $s \ge 2$. Then
\[E(\mathcal {L}(G_{R}))=2^{s+1}(|R^\times|-1)^2.\]
\end{lem}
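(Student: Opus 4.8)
The plan is to read the eigenvalues of $\mathcal{L}(G_R)$ off Corollary \ref{lineeigenvalue}, exploit the rigidity forced by the hypothesis $|R_i|/m_i=2$, and then sum their absolute values against the correct multiplicities. First I would record the structural consequences. Since $|R_i|=2m_i$ we have $|R^\times_i|=|R_i|-m_i=m_i$, so $|R^\times_i|/m_i=1$ for every $i$; hence $\prod_{j\in C}|R^\times_j|/m_j=1$ for \emph{every} $C\subseteq\{1,\ldots,s\}$, $\prod_{i=1}^s\bigl(1+|R^\times_i|/m_i\bigr)=2^s$, and $|R|=\prod_{i=1}^s|R_i|=2^s\prod_{i=1}^s m_i=2^s|R^\times|$. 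In particular $\lambda_C=(-1)^{|C|}|R^\times|$, so the eigenvalue $\lambda_C+|R^\times|-2$ of part (a) equals $2|R^\times|-2$ when $|C|$ is even and $-2$ when $|C|$ is odd, each with multiplicity $1$. Since there are $2^{s-1}$ subsets of each parity, part (a) contributes $2|R^\times|-2$ with multiplicity $2^{s-1}$ and $-2$ with multiplicity $2^{s-1}$.

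Next I would dispose of the degenerate case $|R^\times|=1$, which holds exactly when every $m_i=1$, i.e.\ $R\cong\mathbb{F}_2\times\cdots\times\mathbb{F}_2$. Here $G_R=\otimes_{i=1}^s K_2$ is $1$-regular, hence a disjoint union of edges, so $\mathcal{L}(G_R)$ is edgeless and $E(\mathcal{L}(G_R))=0=2^{s+1}(|R^\times|-1)^2$. This case genuinely has to be peeled off first, because the count $|R|(|R^\times|-2)/2$ in part (c) is a bona fide nonnegative multiplicity only once $|R^\times|\ge2$.

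Assuming now $|R^\times|\ge2$, the three kinds of eigenvalues are easy to sign: $2|R^\times|-2>0$, $|R^\times|-2\ge0$, and $-2<0$. I would therefore combine the contributions of parts (a), (b), (c) as
\[
E(\mathcal{L}(G_R)) = 2^{s-1}\bigl(2|R^\times|-2\bigr) + 2^{s-1}\cdot 2 + \bigl(|R|-2^s\bigr)\bigl(|R^\times|-2\bigr) + |R|\bigl(|R^\times|-2\bigr),
\]
the first two terms from (a), the third from (b), and the last from (c). Substituting $|R|=2^s|R^\times|$, factoring out $2^s$, and simplifying collapses the bracket to $2(|R^\times|-1)^2$, giving $E(\mathcal{L}(G_R))=2^{s+1}(|R^\times|-1)^2$. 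As a sanity check one verifies that the total multiplicity sums to $2^{s-1}|R^\times|^2$, which is the number $|R|\,|R^\times|/2$ of edges of $G_R$, confirming that no eigenvalue has been miscounted.

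Every step here is elementary once the identity $|R^\times_i|/m_i=1$ is noticed, so there is no deep obstacle; the one place to be careful is precisely the degenerate case $|R^\times|=1$, where part (c) of Corollary \ref{lineeigenvalue} formally returns a negative multiplicity and the line graph must instead be identified by hand as an edgeless graph of energy $0$.
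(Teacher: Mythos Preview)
Your argument is correct and follows essentially the same route as the paper: you separate the degenerate case $|R^\times|=1$, then for $|R^\times|\ge2$ read off the spectrum of $\mathcal{L}(G_R)$ from Corollary~\ref{lineeigenvalue} and sum absolute values using $|R|=2^s|R^\times|$. The only cosmetic difference is that the paper merges the two sources of the eigenvalue $-2$ into a single multiplicity $2^{s-1}+|R|(|R^\times|-2)/2$ before summing, whereas you keep the contributions from parts (a) and (c) separate; the arithmetic is of course the same.
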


\begin{proof}
Since $|R^\times|=\prod_{i=1}^s m_i \geq 1$, $|R^\times|=1$ if and only if $R_i\cong\mathbb{F}_2$ for $1\leq i\leq s$.

If $R_i\cong\mathbb{F}_2$ for $1\leq i\leq s$, then the spectrum of $\mathcal {L}(G_{R})$ is $0$ with multiplicity $2^{s-1}$. In this case, the energy of $\mathcal {L}(G_{R})$ is $0$.

If not every $R_i$ is $\mathbb{F}_2$, then $|R^\times|\geq2$ and by Corollary \ref{lineeigenvalue},
\begin{eqnarray*}
\Spec (\mathcal {L}(G_R))=\left(\begin{array}{ccc}
2|R^\times|-2            & |R^\times|-2          &   -2\\
2^{s-1}                  & |R|-2^s               & 2^{s-1}+|R|(|R^\times|-2)/2\\
\end{array}
\right).
\end{eqnarray*}
Since $|R|=2^s|R^\times|$, we have
$E(\mathcal {L}(G_{R})) = (2|R^\times|-2)2^{s-1} +(|R^\times|-2)\left(|R|-2^s\right)+2(2^{s-1}+|R|(|R^\times|-2)/2) = 2^{s+1}(|R^\times|-1)^2$.
\qed\end{proof}

\begin{lem}\label{lem:4}
Let $R$ be as in Assumption \ref{as:1} with $2=|R_1|/m_1=\cdots=|R_t|/m_t < |R_{t+1}|/m_{t+1}$ for some $t$ such that $1 \le t < s$. Then
\[E(\mathcal {L}(G_{R}))=\left\{\begin{array}{rcl}
                                   2^{s+1}(|R^\times|-1)^2,  &   &   \text{if $R=\underbrace{\mathbb{F}_2\times\cdots\times\mathbb{F}_2}_{s-1}\times\mathbb{F}_3$;}\\
                                  2^{t+1} +2|R|\left(|R^\times|-2\right), &   & \text{otherwise.}
                                \end{array}\right.\]
\end{lem}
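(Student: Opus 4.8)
The plan is to evaluate $E(\mathcal{L}(G_R))=\sum|\text{eigenvalue}|$ directly from Corollary \ref{lineeigenvalue}, following the same telescoping strategy as in the proof of Lemma \ref{lem:2} but keeping careful track of the type-(a) eigenvalues that turn out to be negative. Set $N=\{1,\ldots,s\}$ and write $d_i=|R_i^\times|/m_i=|R_i|/m_i-1$, so that the hypothesis reads $d_1=\cdots=d_t=1$ and $d_{t+1},\ldots,d_s\ge 2$, and $\lambda_C=(-1)^{|C|}|R^\times|/\prod_{j\in C}d_j$. Note first that $|R^\times|=\prod_{i=1}^s m_i d_i\ge 2$ because $m_{t+1}d_{t+1}\ge 2$, so the type-(b) eigenvalue $|R^\times|-2$ is nonnegative and the type-(c) contribution to the energy is $2\cdot|R|(|R^\times|-2)/2=|R|(|R^\times|-2)$.

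For the type-(a) terms, write $x_C:=\lambda_C+|R^\times|-2$. The key identity, exactly as in Lemma \ref{lem:2}, is $\sum_{C\subseteq N}x_C\prod_{j\in C}d_j=(|R^\times|-2)\prod_{i=1}^s(1+d_i)$, which follows from $\lambda_C\prod_{j\in C}d_j=(-1)^{|C|}|R^\times|$ together with $\sum_{C\subseteq N}(-1)^{|C|}=0$. Since $|x_C|=x_C$ for $x_C\ge 0$ and $|x_C|=-x_C$ for $x_C<0$, we may write the type-(a) contribution as $\sum_{C\subseteq N}|x_C|\prod_{j\in C}d_j=(|R^\times|-2)\prod_{i=1}^s(1+d_i)-2\sum_{C:\,x_C<0}x_C\prod_{j\in C}d_j$. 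Thus everything reduces to identifying the subsets $C$ with $x_C<0$.

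The sign analysis is the main obstacle, and it is exactly where the two cases split. When $|C|$ is even, $\lambda_C\ge 0$ and $|R^\times|\ge 2$ give $x_C\ge 0$. When $|C|$ is odd with $C\subseteq\{1,\ldots,t\}$, we have $\prod_{j\in C}d_j=1$ and $\lambda_C=-|R^\times|$, so $x_C=-2$ with multiplicity $1$. When $|C|$ is odd and $C$ meets $\{t+1,\ldots,s\}$, we have $\prod_{j\in C}d_j\ge d_{t+1}$, hence $x_C\ge -|R^\times|/d_{t+1}+|R^\times|-2=|R^\times|(1-1/d_{t+1})-2$; I would show this lower bound is nonnegative unless $|R^\times|=2$ and $d_{t+1}=2$. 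Finally, $|R^\times|=\prod_i m_i d_i=2$ together with $d_{t+1}\ge 2$ forces $m_{t+1}d_{t+1}=2$ and every other factor equal to $1$, i.e. $s=t+1$, $R_s\cong\mathbb{F}_3$ and $R_i\cong\mathbb{F}_2$ for $i<s$; this is precisely the special ring $\mathbb{F}_2\times\cdots\times\mathbb{F}_2\times\mathbb{F}_3$.

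Consequently, in the ``otherwise'' case the only negative type-(a) eigenvalues are those indexed by the $2^{t-1}$ odd subsets of $\{1,\ldots,t\}$, each equal to $-2$ with multiplicity $1$, so the correction term equals $-2\cdot(-2)\cdot 2^{t-1}=2^{t+1}$; adding the type-(b) and type-(c) contributions collapses the $\prod_i(1+d_i)$ terms and yields $E(\mathcal{L}(G_R))=2|R|(|R^\times|-2)+2^{t+1}$, as required. For the special case $R\cong\mathbb{F}_2\times\cdots\times\mathbb{F}_2\times\mathbb{F}_3$ I would argue directly: here $|R^\times|=2$, so the type-(b) eigenvalue $|R^\times|-2$ vanishes and the type-(c) multiplicity $|R|(|R^\times|-2)/2$ is $0$, leaving only the type-(a) eigenvalues; for every $C$ we have $|x_C|\prod_{j\in C}d_j=|\lambda_C|\prod_{j\in C}d_j=|R^\times|=2$, and summing over the $2^s$ subsets of $N$ gives $E(\mathcal{L}(G_R))=2^{s+1}=2^{s+1}(|R^\times|-1)^2$. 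The entire computation is routine once the sign pattern of the $x_C$ is settled; that sign bookkeeping, and the identification of $\mathbb{F}_2\times\cdots\times\mathbb{F}_2\times\mathbb{F}_3$ as the unique exceptional configuration, is the crux of the proof.
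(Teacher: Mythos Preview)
Your proof is correct and follows essentially the same approach as the paper: both identify which type-(a) eigenvalues $x_C=\lambda_C+|R^\times|-2$ are negative (exactly those with $C\subseteq\{1,\ldots,t\}$ and $|C|$ odd, in the non-exceptional case), use the telescoping identity $\sum_C x_C\prod_{j\in C}d_j=(|R^\times|-2)\prod_i(1+d_i)$ plus a $2^{t+1}$ correction, and then add in the type-(b) and type-(c) contributions. Your handling of the exceptional ring via $x_C=\lambda_C$ and $|\lambda_C|\prod_{j\in C}d_j=|R^\times|$ is in fact slightly slicker than the paper's direct spectrum computation, but the overall strategy is the same.
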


\begin{proof} First, $|R^\times|\geq2$. Denote by $\lambda^a_{\sec}$ $(\neq-2)$ the second smallest eigenvalue in (a) of Corollary \ref{lineeigenvalue}. Then
\begin{eqnarray*}
  \lambda^a_{\sec} &=& |R^\times|-\dfrac{|R^\times|}{|R^\times_{t+1}|/m_{t+1}}-2\nonumber\\
                   &=& \left(|R_{t+1}|/m_{t+1}-2\right)\left(\prod_{i=1}^sm_i\right)\left(\prod_{i=t+2}^s(|R_i|/m_i-1)\right)-2\geq-1.
\end{eqnarray*}
Thus $\lambda^a_{\sec}=-1$ if and only if $s=t+1$, $|R_{t+1}|/m_{t+1}=3$ and $\prod_{i=1}^sm_i=1$. That is, $\lambda^a_{\sec}=-1$ if and only if $R=\underbrace{\mathbb{F}_2\times\cdots\times\mathbb{F}_2}_t\times\mathbb{F}_3$. In all other cases, we have $\lambda^a_{\sec}\geq0$.

If $R=\underbrace{\mathbb{F}_2\times\cdots\times\mathbb{F}_2}_t\times\mathbb{F}_3$, then by Corollary \ref{lineeigenvalue},
\begin{eqnarray*}
\Spec (\mathcal {L}(G_R))=\left(\begin{array}{cccc}
2           & 1          &   -1 &-2\\
2^{t-1}     & 2^t        & 2^t  & 2^{t-1}\\
\end{array}
\right).
\end{eqnarray*}
Since $|R^\times|=2$, we obtain
$$
E(\mathcal {L}(G_{R}))=2 \cdot 2^{t-1}+1 \cdot 2^t+1 \cdot 2^t+2 \cdot 2^{t-1}=2^{t+2}(|R^\times|-1)^2.
$$

If $R \ne \underbrace{\mathbb{F}_2\times\cdots\times\mathbb{F}_2}_t\times\mathbb{F}_3$, then $\lambda^a_{\sec}\geq0$ and so $-2$ is the unique negative eigenvalue. Since by Corollary \ref{lineeigenvalue} the multiplicity of $-2$ is $2^{t-1}$, we have
\begin{eqnarray*}
 \sum_{C\subseteq N} \left|\lambda_C+|R^\times|-2\right|\cdot\prod_{j\in C}\frac{|R_j^\times|}{m_j}
    &=& 2 \cdot 2 \cdot 2^{t-1}+\sum_{C\subseteq N}\left((-1)^{|C|}|R^\times|+\left(|R^\times|-2\right)\cdot\prod_{j\in C}\frac{|R_j^\times|}{m_j}\right)\\
    &=&2^{t+1} +|R^\times|\cdot\sum_{C\subseteq N}(-1)^{|C|}+\left(|R^\times|-2\right)\cdot\sum_{C\subseteq N}\left(\prod_{j\in C}\frac{|R_j^\times|}{m_j}\right)\\
    &=&2^{t+1} +\left(|R^\times|-2\right)\prod_{i=1}^s\left(1+\dfrac{|R^\times_i|}{m_i}\right).\\
\end{eqnarray*}
Therefore,
\begin{eqnarray*}
E(\mathcal {L}(G_{R}))&=&2^{t+1} +\left(|R^\times|-2\right)\prod_{i=1}^s\left(1+\dfrac{|R^\times_i|}{m_i}\right)\\
&&+\left(|R^\times|-2\right)\cdot\left(|R|-\prod_{i=1}^s\left(1+\dfrac{|R^\times_i|}{m_i}\right)\right)+|R|\left(|R^\times|-2\right)\\
           &=&2^{t+1} +2|R|\left(|R^\times|-2\right).
\end{eqnarray*}
\qed\end{proof}

\section{Spectral moments of unitary Cayley graphs and their line graphs}
\label{sec:moments}

\begin{thm}\label{thmSM}
Let $R$ be as in Assumption \ref{as:1}. Then, for any integer $k \ge 1$,
\begin{eqnarray}
s_k(G_R) & = & |R^\times|\prod_{i=1}^s\left(|R_i^\times|^{k-1}-(-m_i)^{k-1}\right) \label{thmE1}\\
s_k(\mathcal {L}(G_R)) & = & \left(\sum_{j=0}^k
\choose{k}{j} (|R^\times|-2)^{k-j}s_j(G_R)\right) - (-2)^{k-1}|R|(|R^\times|-2).\label{thmE2}
\end{eqnarray}
\end{thm}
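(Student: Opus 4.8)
The plan is to compute the two spectral moments separately, using the eigenvalue data already assembled in Lemma \ref{ringspectrum} and Corollary \ref{lineeigenvalue}, together with the tensor-product structure $G_R = \otimes_{i=1}^s G_{R_i}$ recorded in Lemma \ref{tensorspectrum}.

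For equation (\ref{thmE1}), the cleanest route is to avoid summing over subsets $C$ directly and instead exploit multiplicativity. By Lemma \ref{tensorspectrum}, the eigenvalues of $G_R$ are exactly the products $\prod_{i=1}^s \mu_i$, where $\mu_i$ ranges over the eigenvalues of $G_{R_i}$ (counted with multiplicity). Hence $s_k(G_R) = \sum \left(\prod_{i=1}^s \mu_i\right)^k = \prod_{i=1}^s \left(\sum_{\mu_i} \mu_i^k\right) = \prod_{i=1}^s s_k(G_{R_i})$, so the $k$-th moment is multiplicative over the local factors. It therefore suffices to compute $s_k(G_{R_i})$ for a single local ring, and here I would read off the local spectrum from Lemma \ref{ringspectrum}: the eigenvalues are $|R_i^\times|$ (once), $-m_i$ (with multiplicity $|R_i^\times|/m_i$), and $0$ (with the remaining multiplicity, which contributes nothing for $k \ge 1$). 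This gives
\[
s_k(G_{R_i}) = |R_i^\times|^k + \frac{|R_i^\times|}{m_i}(-m_i)^k = |R_i^\times|\left(|R_i^\times|^{k-1} - (-m_i)^{k-1}\right),
\]
after factoring out $|R_i^\times|$ and using $(-m_i)^k = -m_i \cdot (-m_i)^{k-1}$. Taking the product over $i$ and pulling out the single factor $\prod_i |R_i^\times| = |R^\times|$ yields exactly (\ref{thmE1}).

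For equation (\ref{thmE2}), the key fact (already cited before Corollary \ref{lineeigenvalue}) is that the eigenvalues of $\mathcal{L}(G_R)$ are $\lambda + |R^\times| - 2$ for each eigenvalue $\lambda$ of $G_R$ (there are $|R|$ of these with multiplicity, since $G_R$ is $|R^\times|$-regular of order $|R|$), together with $-2$ repeated $|R|(|R^\times|-2)/2$ times. So I would write
\[
s_k(\mathcal{L}(G_R)) = \sum_{\lambda}\left(\lambda + |R^\times| - 2\right)^k + \frac{|R|(|R^\times|-2)}{2}(-2)^k,
\]
where the first sum is over all $|R|$ eigenvalues of $G_R$ with multiplicity. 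Expanding $(\lambda + (|R^\times|-2))^k$ by the binomial theorem and interchanging the order of summation turns the first sum into $\sum_{j=0}^k \binom{k}{j}(|R^\times|-2)^{k-j}\sum_\lambda \lambda^j = \sum_{j=0}^k \binom{k}{j}(|R^\times|-2)^{k-j} s_j(G_R)$, using the convention $s_0(G_R) = |R|$. Finally I would simplify the last term: $\tfrac{|R|(|R^\times|-2)}{2}(-2)^k = \tfrac{(-2)^k}{-2}\,|R|(|R^\times|-2) = -(-2)^{k-1}|R|(|R^\times|-2)$, matching the stated correction term.

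I do not anticipate a genuine obstacle here, as both identities reduce to bookkeeping once the spectra are in hand; the only points requiring care are the conventions at $j=0$ (ensuring $s_0(G_R)=|R|$ is used correctly inside the binomial sum) and confirming that the $0$-eigenvalues of $G_R$ contribute nothing to $s_k$ for $k \ge 1$ in the first part while still being correctly counted among the $|R|$ shifted eigenvalues in the second part. The multiplicity subtleties flagged in Remarks \ref{rem:mul} and \ref{rem:mu2} are irrelevant for moment computations, since coincident eigenvalues are simply summed with their total multiplicity.
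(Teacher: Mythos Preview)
Your proposal is correct and follows essentially the same route as the paper: the paper too proves (\ref{thmE1}) by invoking Lemma \ref{tensorspectrum} to get $s_k(G_R)=\prod_i s_k(G_{R_i})$, computing each local moment from Lemma \ref{ringspectrum} exactly as you do, and then derives (\ref{thmE2}) by the same binomial expansion of the shifted eigenvalues plus the $-2$ block (the paper merely packages this last step as a separate lemma for arbitrary regular graphs before specialising to $G_R$).
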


To prove this we need the following lemma.

\begin{lem}
\label{SMrelation}
Let $G$ be an $r$-regular graph of order $n$. Then
$$
  s_k(\mathcal {L}(G)) = \left(\sum_{j=0}^k
  \choose{k}{j} (r-2)^{k-j}s_j(G)\right) - (-2)^{k-1}n(r-2).
$$
\end{lem}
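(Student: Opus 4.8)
For an $r$-regular graph $G$ of order $n$,
$$s_k(\mathcal{L}(G)) = \left(\sum_{j=0}^k \binom{k}{j}(r-2)^{k-j}s_j(G)\right) - (-2)^{k-1}n(r-2).$$

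Let me recall the key fact established earlier in the paper (from Sachs/Cvetković): if an $r$-regular graph $G$ of order $n$ has eigenvalues $\lambda_1, \ldots, \lambda_n$, then the eigenvalues of $\mathcal{L}(G)$ are:
- $\lambda_i + r - 2$ for $i = 1, \ldots, n$, and
- $-2$ repeated $n(r-2)/2$ times.

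The total number of vertices of $\mathcal{L}(G)$ is the number of edges of $G$, which is $nr/2$. Let me verify: $n + n(r-2)/2 = n(1 + (r-2)/2) = n \cdot r/2$. ✓

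Now compute $s_k(\mathcal{L}(G)) = \sum_i (\lambda_i + r - 2)^k + \frac{n(r-2)}{2}(-2)^k$.

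For the first sum, use the binomial theorem:
$$\sum_{i=1}^n (\lambda_i + (r-2))^k = \sum_{i=1}^n \sum_{j=0}^k \binom{k}{j}\lambda_i^j (r-2)^{k-j} = \sum_{j=0}^k \binom{k}{j}(r-2)^{k-j}\sum_{i=1}^n \lambda_i^j = \sum_{j=0}^k \binom{k}{j}(r-2)^{k-j}s_j(G).$$

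For the second term:
$$\frac{n(r-2)}{2}(-2)^k = \frac{n(r-2)}{2} \cdot (-2) \cdot (-2)^{k-1} = -n(r-2)(-2)^{k-1}.$$

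So:
$$s_k(\mathcal{L}(G)) = \sum_{j=0}^k \binom{k}{j}(r-2)^{k-j}s_j(G) - (-2)^{k-1}n(r-2).$$

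This matches exactly. There's no obstacle — it's a direct computation.

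=== PROOF PROPOSAL ===

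The plan is to compute $s_k(\mathcal{L}(G))$ directly from the explicit description of the spectrum of $\mathcal{L}(G)$ recalled just before Corollary \ref{lineeigenvalue}. That result (due to Sachs) states that if $G$ is $r$-regular of order $n$ with eigenvalues $\lambda_1, \ldots, \lambda_n$, then the eigenvalues of $\mathcal{L}(G)$ are $\lambda_i + r - 2$ for $i = 1, \ldots, n$, together with $-2$ repeated $n(r-2)/2$ times. First I would write, by definition of the $k$-th spectral moment,
$$
s_k(\mathcal{L}(G)) = \sum_{i=1}^n (\lambda_i + r - 2)^k + \frac{n(r-2)}{2}\,(-2)^k.
$$

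Next I would expand the first sum via the binomial theorem. Writing $(\lambda_i + (r-2))^k = \sum_{j=0}^k \binom{k}{j}\lambda_i^j (r-2)^{k-j}$ and interchanging the order of summation gives
$$
\sum_{i=1}^n (\lambda_i + r - 2)^k = \sum_{j=0}^k \binom{k}{j}(r-2)^{k-j}\sum_{i=1}^n \lambda_i^j = \sum_{j=0}^k \binom{k}{j}(r-2)^{k-j}\,s_j(G),
$$
where the last equality simply recognises $\sum_i \lambda_i^j$ as $s_j(G)$. For the remaining contribution from the eigenvalue $-2$, I would factor out one copy of $-2$ to obtain
$$
\frac{n(r-2)}{2}\,(-2)^k = \frac{n(r-2)}{2}\cdot(-2)\cdot(-2)^{k-1} = -(-2)^{k-1}\,n(r-2).
$$

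Combining the two pieces yields the claimed identity. There is essentially no obstacle here: the entire argument is a one-line substitution of the known $\mathcal{L}(G)$-spectrum followed by a binomial expansion, and the only point requiring a moment's care is the bookkeeping on the $-2$ eigenvalue, namely that its multiplicity $n(r-2)/2$ combines with $(-2)^k$ to give precisely $-(-2)^{k-1}n(r-2)$. I would note in passing that this lemma is purely spectral and does not use the ring structure; it applies to any $r$-regular graph, and the statement (\ref{thmE2}) of Theorem \ref{thmSM} then follows immediately by applying it to $G = G_R$ with $r = |R^\times|$.
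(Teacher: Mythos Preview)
Your proof is correct and follows exactly the same approach as the paper: you invoke the known spectrum of $\mathcal{L}(G)$ for an $r$-regular $G$ and expand $\sum_i(\lambda_i+r-2)^k$ by the binomial theorem, then simplify the $-2$ contribution. The paper's own proof is terser (it just says ``the result then follows from a straightforward computation''), but your write-up is precisely that computation carried out in full.
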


\begin{proof}
Since $G$ is $r$-regular, the eigenvalues (see \cite[Theorem 2.4.1]{kn:Cvetkovic10}) of $\mathcal {L}(G)$ are $\lambda_i+r-2$, for $i=1,2,\ldots,n$, and $-2$ repeated $n(r-2)/2$ times, where $\lambda_1,\lambda_2,\ldots,\lambda_n$ are the eigenvalues of $G$. The result then follows from a straightforward computation.
\qed\end{proof}

As a consequence of Lemma \ref{tensorspectrum}, we have $s_k(G\otimes H) = s_k(G)\cdot s_k(H)$. In general, by induction, we see that the $k$-th spectral moment of the tensor product of a finite number of graphs is equal to the product of the $k$-th moments of the factor graphs.

\bigskip

\begin{Tproof}
\textbf{of Theorem \ref{thmSM}.}~~
By Lemma \ref{ringspectrum},
$$
s_k(G_{R_i}) = |R_i^\times|^k+(-m_i)^k\cdot\frac{|R_i^\times|}{m_i}=|R_i^\times|\cdot\left(|R_i^\times|^{k-1}-(-m_i)^{k-1}\right).
$$
Since $G_R=\bigotimes_{i=1}^sG_{R_i}$ as mentioned in \textsection \ref{sec:prel}, from the discussion above we obtain
\begin{eqnarray*}
  s_k(G_R) &=&\prod_{s=1}^ss_k(G_{R_i})=\prod_{i=1}^s|R_i^\times|\left(|R_i^\times|^{k-1}-(-m_i)^{k-1}\right)\nonumber\\
           &=&|R^\times|\prod_{i=1}^s\left(|R_i^\times|^{k-1}-(-m_i)^{k-1}\right),
\end{eqnarray*}
which is exactly (\ref{thmE1}).

Since $G_R$ is $|R^\times|$-regular with order $|R|$, (\ref{thmE2}) follows from Lemma \ref{SMrelation}  and (\ref{thmE1}).
\qed\end{Tproof}

Denote by $n_3(G)$ the number of triangles in a graph $G$. Since $s_3(G)=6n_3(G)$ \cite{kn:Cvetkovic10}, Theorem \ref{thmSM} implies the following formulae.

\begin{cor}
Let $R$ be as in Assumption \ref{as:1}. Then
\begin{eqnarray*}
  n_3(G_R) &=& \frac{1}{6}|R^\times||R|\prod_{i=1}^s\left(|R_i^\times|-m_i\right) \\
  n_3(\mathcal {L}(G_R)) &=&  \frac{1}{6}|R^\times||R|\left(\prod_{i=1}^s\left(|R_i^\times|-m_i\right)+\left(|R^\times|-1\right)\left(|R^\times|-2\right)\right).
\end{eqnarray*}
\end{cor}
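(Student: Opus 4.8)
The plan is to read off both formulae as the special case $k=3$ of Theorem~\ref{thmSM}, combined with the identity $s_3(G)=6n_3(G)$ quoted just before the statement, so that $n_3(G)=\frac{1}{6}s_3(G)$ for any graph $G$. Hence it suffices to evaluate $s_3(G_R)$ and $s_3(\mathcal{L}(G_R))$ from (\ref{thmE1}) and (\ref{thmE2}) and divide by $6$.

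For the first formula, setting $k=3$ in (\ref{thmE1}) gives
\[
s_3(G_R)=|R^\times|\prod_{i=1}^s\left(|R_i^\times|^2-m_i^2\right).
\]
The only place the ring structure enters is the factorisation $|R_i^\times|^2-m_i^2=(|R_i^\times|-m_i)(|R_i^\times|+m_i)$ together with the local-ring relation $R_i^\times=R_i\setminus M_i$ recorded in \textsection\ref{sec:prel}, which yields $|R_i^\times|+m_i=|R_i|$. Thus each factor equals $(|R_i^\times|-m_i)|R_i|$, and since $\prod_{i=1}^s|R_i|=|R|$ we obtain $s_3(G_R)=|R^\times|\,|R|\prod_{i=1}^s(|R_i^\times|-m_i)$; dividing by $6$ gives the stated expression for $n_3(G_R)$.

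For the line graph I would specialise (\ref{thmE2}) to $k=3$, writing $r=|R^\times|$ for brevity. I first record the low moments $s_0(G_R)=|R|$, $s_1(G_R)=0$ and $s_2(G_R)=|R|\,r$, each immediate from (\ref{thmE1}) (or, for $s_1,s_2$, from the absence of loops and $r$-regularity). Then the sum $\sum_{j=0}^3\choose{3}{j}(r-2)^{3-j}s_j(G_R)$ collapses to $|R|(r-2)^3+3|R|r(r-2)+s_3(G_R)$, and subtracting $(-2)^{2}|R|(r-2)=4|R|(r-2)$ produces $s_3(\mathcal{L}(G_R))$. Cancelling the common term $s_3(G_R)=|R|r\prod_{i=1}^s(|R_i^\times|-m_i)$ against the target, the remaining equality reduces to the polynomial identity $(r-2)^2+3r-4=r(r-1)$; substituting it back gives $s_3(\mathcal{L}(G_R))=|R|r\bigl(\prod_{i=1}^s(|R_i^\times|-m_i)+(r-1)(r-2)\bigr)$, and division by $6$ yields $n_3(\mathcal{L}(G_R))$. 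The computation is entirely elementary and presents no genuine obstacle; the one point worth flagging is that everything is a polynomial identity in $r=|R^\times|$, so the degenerate value $r=2$ (where $r-2=0$) needs no separate treatment and the formula holds uniformly.
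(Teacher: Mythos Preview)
Your proof is correct and follows exactly the approach the paper takes: the corollary is stated immediately after the remark that $s_3(G)=6n_3(G)$, and the paper's proof is simply ``Theorem~\ref{thmSM} implies the following formulae.'' You have merely spelled out the algebra that the paper leaves to the reader, namely the factorisation $|R_i^\times|^2-m_i^2=(|R_i^\times|-m_i)|R_i|$ for the first formula and the evaluation of the binomial sum in (\ref{thmE2}) for the second.
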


Denote by $n_4(G)$ the number of quadrangles ($4$-cycles) in $G$. It is well known \cite{kn:Cvetkovic10} that, if $G$ is a graph with $n$ vertices, $m$ edges and degree sequence $(d_1,d_2,\ldots,d_n)$, then $s_4(G) = 2m+4\sum_{j=1}^n \choose{d_j}{2}+8n_4(G)$. This and Theorem \ref{thmSM} together imply the following formulae.

\begin{cor}
Let $R$ be as in Assumption \ref{as:1}. Then
\begin{eqnarray*}
  n_4(G_R) &=& \frac{1}{8}|R^\times||R|\left(1-2|R^\times|+\prod_{i=1}^s\left(|R_i^\times|^2-|R_i^\times|m_i+m_i^2\right)\right) \\
  n_4(\mathcal {L}(G_R)) &=& \frac{1}{8}|R^\times||R|\left(|R^\times|(|R^\times|-3)^2-5+4(|R^\times|-2)\prod_{i=1}^s\left(|R_i^\times|-m_i\right)\right.\nonumber\\
  &&\left.+\prod_{i=1}^s\left(|R_i^\times|^2-|R_i^\times|m_i+m_i^2\right)\right).
\end{eqnarray*}
\end{cor}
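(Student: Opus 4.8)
The plan is to invert the identity quoted just before the statement, $s_4(G) = 2m+4\sum_{j=1}^n\binom{d_j}{2}+8n_4(G)$, into
$n_4(G) = \frac{1}{8}\bigl(s_4(G)-2m-4\sum_j\binom{d_j}{2}\bigr)$, and then feed in the value of $s_4$ supplied by Theorem \ref{thmSM} for $G=G_R$ and $G=\mathcal{L}(G_R)$ in turn. Since both graphs are regular, their edge counts and the sum $\sum_j\binom{d_j}{2}$ are read off immediately from the degree and the order, so the whole argument reduces to substituting and simplifying.

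For $n_4(G_R)$ I would use that $G_R$ is $|R^\times|$-regular on $|R|$ vertices, so $2m=|R||R^\times|$ and $4\sum_j\binom{d_j}{2}=2|R||R^\times|(|R^\times|-1)$. Taking $k=4$ in (\ref{thmE1}) gives $s_4(G_R)=|R^\times|\prod_{i=1}^s(|R_i^\times|^3-(-m_i)^3)$, and the key simplification is the factorisation $|R_i^\times|^3+m_i^3=(|R_i^\times|+m_i)(|R_i^\times|^2-|R_i^\times|m_i+m_i^2)=|R_i|(|R_i^\times|^2-|R_i^\times|m_i+m_i^2)$, which together with $\prod_i|R_i|=|R|$ rewrites $s_4(G_R)=|R^\times||R|\prod_{i=1}^s(|R_i^\times|^2-|R_i^\times|m_i+m_i^2)$. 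Substituting into the inverted identity and collecting the elementary terms $-1-2(|R^\times|-1)=1-2|R^\times|$ produces the first formula.

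For $n_4(\mathcal{L}(G_R))$ the line graph is $(2|R^\times|-2)$-regular on $\frac{1}{2}|R||R^\times|$ vertices, which determines its edge count $\frac{1}{2}|R||R^\times|(|R^\times|-1)$ and the quantity $\sum_j\binom{d_j}{2}=\frac{1}{2}|R||R^\times|(|R^\times|-1)(2|R^\times|-3)$. For $s_4(\mathcal{L}(G_R))$ I would take $k=4$ in (\ref{thmE2}), expressing it as the binomial combination $\sum_{j=0}^4\binom{4}{j}(|R^\times|-2)^{4-j}s_j(G_R)$ plus the correction $8|R|(|R^\times|-2)$. The needed low moments come from (\ref{thmE1}) (with $s_0(G_R)=|R|$ handled separately): $s_1=0$, $s_2=|R||R^\times|$, and $s_3=|R||R^\times|\prod_i(|R_i^\times|-m_i)$ via $|R_i^\times|^2-m_i^2=(|R_i^\times|-m_i)|R_i|$, while $s_4$ is as above. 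In the target formula the two product terms $4(|R^\times|-2)\prod_i(|R_i^\times|-m_i)$ and $\prod_i(|R_i^\times|^2-|R_i^\times|m_i+m_i^2)$ arise precisely from $s_3$ and $s_4$, and the remaining purely regular-graph data should collapse into the polynomial $|R^\times|(|R^\times|-3)^2-5$.

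The conceptual content here is minimal; the main obstacle is purely computational and is concentrated in this last step. After factoring out $\frac{1}{8}|R^\times||R|$, one must check that the block consisting of the $s_0,s_1,s_2$ contributions, the binomial correction $8|R|(|R^\times|-2)$, the line-graph edge count, and the degree-sum term really coalesces into $|R^\times|(|R^\times|-3)^2-5$ (equivalently, that the degree-four polynomial in $|R^\times|$ reduces to $|R^\times|^4-6|R^\times|^3+9|R^\times|^2-5|R^\times|$). This is routine but error-prone polynomial bookkeeping, and it is where I would be most careful.
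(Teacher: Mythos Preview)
Your proposal is correct and is exactly the argument the paper has in mind: the paper's proof consists of the single sentence ``This and Theorem~\ref{thmSM} together imply the following formulae,'' and you have simply spelled out the substitution and the algebraic simplifications (the factorisations $|R_i^\times|^3+m_i^3=|R_i|(|R_i^\times|^2-|R_i^\times|m_i+m_i^2)$ and $|R_i^\times|^2-m_i^2=|R_i|(|R_i^\times|-m_i)$, and the polynomial identity for the line-graph case) that make it work.
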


\medskip

\noindent \textbf{Acknowledgements}~~X. Liu is supported by MIFRS and MIRS of the University of Melbourne. S. Zhou is supported by a Future Fellowship (FT110100629) of the Australian Research Council.

\end{document}